\providecommand{\U}[1]{\protect\rule{.1in}{.1in}}
\newtheorem{theorem}{Theorem}[section]
\newtheorem{proposition}[theorem]{Proposition}
\newtheorem{corollary}[theorem]{Corollary}
\newtheorem{example}[theorem]{Example}
\newtheorem{examples}[theorem]{Examples}
\newtheorem{final remark}[theorem]{Final Remark}
\newtheorem{definition}[theorem]{Definition}
\begin{document}

\title{Summing multilinear operators by blocks: the isotropic and anisotropic cases}
\author{Geraldo Botelho\thanks{Supported by CNPq Grant
304262/2018-8 and Fapemig Grant PPM-00450-17.}\,\, and  Davidson F. Nogueira\thanks{Supported by a CNPq scholarship.\newline 2020 Mathematics Subject Classification: 46G25, 47B10, 47H60, 47L22.\newline Keywords: Banach spaces, summing multilinear operators, Banach ideals, sequence classes.
}}
\date{}
\maketitle

\begin{abstract} Unifying several directions of the development of the study of summing multilinear operators between Banach spaces, we construct a general framework that studies, under one single definition, multilinear operators that are summing with respect to sums taken over any number of indices, iterated and non-iterated sums (the isotropic and the anisotropic cases), sums over arbitrary blocks and over several different sequence norms. A large number of special classes of multilinear operators and of methods of generating classes of multilinear operators are recovered as particular instances. Ideal properties and coincidence theorems for the general classes are proved.
\end{abstract}

\section{Introduction}

Absolutely summing multilinear operators between Banach spaces, as well as closely related classes of multilinear operators, have been studied since the 1983 seminal paper \cite{pietsch1983ideals} by A. Pietsch. A huge amount of research has been done in the subject since then; to avoid a long list of references we just refer the reader to recent developments that can be found in, e.g., \cite{achour2014factorization, annfunctanal, paraibanospilarlama2019, bayart, botelho2017transformation, botelho2016type, pacific, jamilson, dimant2019diagonal, nunez-alarcon2019Sharp, ribeiro2019}.

As is usual in the multilinear theory, there are several different classes of multilinear operators that generalize the ideal of absolutely $p$-summing linear operators and related operator ideals, and a number of them have already been studied. Each of these classes has its own interest, some of them because they generalize nice and desired properties from the linear case, some of them because the role they play in the nonlinear context.

In order to explain the purpose of this paper, we present the very first definition of summing multilinear operators, that goes back to \cite{pietsch1983ideals}. For a Banach space $E$, $E^*$ denotes its topological dual and $B_E$ denotes its closed unit ball. Given Banach spaces $E_1, \ldots, E_n,F$, an $n$-linear operator $T \colon E_1 \times \cdots\times E_n \longrightarrow F$ is absolutely $(q;p_1, \ldots, p_n)$-summing, $\frac{1}{q} \leq \frac{1}{p_1} + \cdots + \frac{1}{p_n}$, if there is a constant $C > 0$ such that
$$\left(\sum_{j=1}^k \|T(x_1^j, \ldots, x_n^j)\|^q \right)^{1/q} \leq C \cdot \prod\limits_{i=1}^n \sup_{\varphi_i \in B_{E_i^*}} \left(\sum_{j=1}^k |\varphi_i(x_i^j)|^{p_i} \right)^{1/{p_i}}, $$
for all $k \in \mathbb{N}$ and $x_i^j \in E_i$, $j = 1,\ldots, k$, $i = 1, \ldots, n$.

The subject has evolved aiming generality, diversity and usefullness in the following four directions:\\
(i) Instead of summing in only one index, one can sum in some or in all indices. For example, summing in all indices we shall consider $\sum\limits_{j_1=1}^k \sum\limits_{j_2=1}^k \cdots \sum\limits_{j_n=1}^k \|T(x_1^{j_1}, \ldots, x_n^{j_n})\|^q $ instead of $\sum\limits_{j=1}^k \|T(x_1^j, \ldots, x_n^j)\|^q$. This approach led to the successful class of multiple summing operators that goes back to the classical inequalities of Littlewood and of Bohnenblust-Hille. Remarkable applications of the class of multiple summing operators can be found, e.g., in \cite{bayart, defantpopa, montanaro2012some, perez2008unbounded}. \\
(ii) Considering $\mathbb{N}^n$ as a (generalized) matrix, absolutely summing operators take the sum over the diagonal of the matrix and multiple summing operators take the sum over the whole matrix. One can also take sums over other subsets of the matrix, which we call blocks. Sums over some specific blocks, which include the diagonal and the whole matrix, were considered in \cite{nacibarXiv2019, annfunctanal, albuquerque2014bohnenblust, araujo2016some, botelho2008summability, zalduendo1993estimate}.\\
(iii) Instead of working with only one parameter $q$, we can work with parameters $q_1, \ldots, q_n$ and consider iterated sums like
$$\left(\sum\limits_{j_1=1}^k \cdots  \left(\sum\limits_{j_{n-1}=1}^k \left(\sum\limits_{j_n=1}^k \|T(x_1^{j_1}, \ldots, x_n^{j_n})\|^{q_n}\right)^{\frac{q_{n-1}}{q_n}}\right)^{\frac{q_{n-2}}{q_{n-1}}} \cdots \right)^{\frac{1}{q_1}}. $$
This is called the anisotropic approach and has already been considered in \cite{nacibarXiv2019, annfunctanal, paraibanospilarlama2019, albuquerque2017holder, albuquerque2016optimal, nacibjfa, albuquerque2017some, aron2017optimal, bayart, nunez-alarcon2019Sharp}. Of course, the case $q = q_1 = \cdots = q_n$ recovers the isotropic case.\\
(iv) The $\ell_p$ and weak-$\ell_p$ norms can be replaced by other norms on sequence spaces. For linear operators this was done by Cohen \cite{cohen} to describe the dual of the ideal of absolutely $p$-summing operators, by Diestel, Jarchow and Tonge \cite{diestel1995absolutely} to study almost summing operators and in the definition of operators of type $p$ and of cotype $q$ (see \cite{defant1992tensor}). Multilinear counterparts of these classes were studied in, e.g., \cite{almostsumming, botelho2016type, bushi, jamilson, marceladaniel}. Attempts to consider general sequence norms appeared in \cite{botelho2017transformation, ewertonLAMA, diana}, but only for the diagonal in the isotropic case.

The purpose of this paper is to introduce a general framework that encompasses several of the developments describe above in one single definition. This definition considers sums in any number of indices, the isotropic and the anisotropic cases, sums over arbitrary blocks and a large variety of sequence norms. The environment we propose not only recovers the studied classes as particular instances but opens the gate for a number of new classes of summing multilinear operators. We prove that the general classes we introduce are Banach ideals of multilinear operators and, to illustrate how the theory can be fruitful, we prove that a known coincidence theorem for multiple summing bilinear operators is a particular case of a more general result obtained in our framework.

By ${\cal L}(E_1, \ldots, E_n;F)$ we denote the Banach space of all $n$-linear continuous operators from $E_1 \times \cdots \times E_n$ to $F$, where $E_1, \ldots, E_n.F$ are Banach spaces over $\mathbb{K} = \mathbb{R}$ or $\mathbb{C}$, endowed with the usual sup norm. For the general theory of (spaces of) multilinear operators we refer to \cite{dineen, mujica}.

\section{The construction}
By $(e_j)_{j=1}^\infty$ we denote the canonical vectors of scalar-valued sequence spaces and given $x\in E$ and $n \in \mathbb{N}$, we use the notation
\begin{align*}
	x\cdot e_j = (0,\ldots ,0,x,0,0,\ldots ),
\end{align*}
where $x$ appears in the $j$-th coordinate. The symbols  $E\stackrel{1}{\hookrightarrow}F$ means that $E$ is a linear subspace of $F$ and $\Vert x\Vert_F\leq \Vert x\Vert_E$ for every $x\in E$. Given $\varphi_m\in E_m'$, $m=1,\ldots,n$, and $b\in F$, by $\varphi_1\otimes \ldots \otimes \varphi_n\otimes b$ we mean the operator in $\mathcal{L}(E_1,\ldots , E_n;F) $ given by
\begin{align*}
    \varphi_1\otimes \cdots \otimes \varphi_n\otimes b (x_1,\ldots , x_n) = \varphi_1(x_1) \cdots \varphi_n(x_n) b.
\end{align*}
Finite linear combinations of operators of this kind are called $n$-linear operators of {\it finite type} and the subspace formed by such operators is denoted by $\mathcal{L}_f(E_1, \ldots, E_n;F)$.

Throughout, all sequence spaces are considered with the coordinatewise algebraic operations. Given a subset $N$ of $\mathbb{N}$ and a sequence $(x_j)_{j=1}^\infty$ in $E$, the symbol $(x_j)_{j \in N}$ denotes the sequence whose $j$-th coordinate is $x_j$ if $j \in N$ and $0$ if $j \notin J$. It is easy to see  that if $(x_j)_{j=1}^\infty$ and $(y_j)_{j=1}^\infty$ are sequence in $E$ and $\lambda$ is a scalar, then
\begin{align}\label{eralemma}
    	(x_j)_{j\in N}+\lambda(y_j)_{j\in N} = (x_j+\lambda y_j)_{j\in N}.
    \end{align}

By $c_{00}(E)$ and $\ell_\infty(E)$ we mean the spaces of $E$-valued eventually null sequences and bounded sequences, the latter endowed with the sup norm.
\begin{definition}\rm \cite[Definition 1.1]{botelho2017transformation} A {\it sequence class} is a rule $X$ that to each Banach space $E$ assigns a Banach space $X(E)$ of $E$-valued sequences such that:\\
$\bullet$ $c_{00}(E)\subset X(E)\stackrel{1}{\hookrightarrow} \ell_{\infty}(E)$,\\
$\bullet$ $\Vert e_j\Vert_{X(\mathbb{K})} = 1$ for every $j\in\mathbb{N}$.
\end{definition}

A sequence class $X$ is {\it linearly stable} if, regardless of the Banach spaces $E$ and $F$, the linear operator $u \in {\cal L}(E;F)$ and the sequence $(x_j)_{j=1}^{\infty}\in X(E)$, $(u(x_j))_{j=1}^{\infty}\in X(F)$ and $$\|(u(x_j))_{j=1}^{\infty}\|_{X(F)} \leq \|u\|\cdot \|(x_j)_{j=1}^{\infty}\|_{X(E)}.$$ 

Several examples of linearly stable sequences classes can be found in \cite{botelho2017transformation}, including the classes $\ell_p(\cdot)$ of absolutely $p$-summable sequences, $\ell_p^w(\cdot)$ of weakly $p$-summables sequences, $\ell_p^u(\cdot)$ of unconditionally $p$-summable sequences, $c_0(\cdot)$ of norm null sequences, $\ell_\infty(\cdot)$ of bounded sequences, $\ell_p\langle \cdot \rangle$ of Cohen strongly $p$-summable sequences, ${\rm Rad}(\cdot)$ of almost unconditionally summable sequences and ${\rm RAD}(\cdot)$ of almost unconditionally surely bounded sequences. Further examples can be found in \cite{pacific}.

To comprise the anisotropic case we need to work with iterated sequences classes, for example,
$$\ell_p\left(\ell_q(E)\right) = \left\{\left((x_i^j)_{i=1}^\infty \right)_{j=1}^\infty : x_i^j \in E, \sum_{j=1}^\infty \left(\sum_{i=1}^\infty\|x_i^j\|^p \right)^{\frac{p}{q}} < \infty \right\}, $$
which is a Banach space with the norm
$$\left\|\left((x_i^j)_{i=1}^\infty \right)_{j=1}^\infty  \right\|_{\ell_p\left(\ell_q(E)\right)} = \left( \sum_{j=1}^\infty \left(\sum_{i=1}^\infty\|x_i^j\|^p \right)^{\frac{p}{q}} \right)^{\frac{1}{q}}. $$
Given sequence classes $X_1,\ldots , X_n$, we denote
\begin{align*}
	\mathbf{X}_n(\cdot) := X_1(\cdots (X_n(\cdot))\cdots).
\end{align*}

A nonvoid subset $B$ of $\mathbb{N}$ shall be called a {\it block}. At the heart of our construction lies the following definition: for any $j_1,\ldots,j_{n-1}\in\mathbb{N}$, we denote
\begin{align*}
	B^{j_1,\ldots, j_{n-1}}: =\{j_n\in\mathbb{N}: (j_1,\ldots, j_{n-1},j_n)\in B\}.
\end{align*}
The simple but key observation is that
$$B = \bigcup_{j_1, \ldots, j_{n-1}=1}^\infty \bigcup_{j_n \in B^{j_1, \ldots, j_{n-1}}}\{j_1, \ldots, j_n\}, $$
and that running $j_n$ over $B^{j_1, \ldots, j_{n-1}}$ for fixed $j_1, \ldots, j_{n-1}$, then runnning backwards $j_{n-1}, \ldots, j_1$ over $\mathbb{N}$, each element of $B$ is taken exactly once.

\begin{proposition}
	Let $X_1,\ldots, X_n$, $Y_1,\ldots, Y_n$ be sequence classes and $B \subseteq \mathbb{N}^n$ be a block. The following conditions are equivalent for an $n$-linear operator $T\in\mathcal{L}(E_1, \ldots,E_n;F)$:
    \begin{enumerate}
    \item[\rm(i)] If $(x^{(k)}_j)_{j=1}^{\infty}\in X_k(E_k)$ for $k=1,\ldots,n$, then
    \begin{align*}
    	\left(\ldots\left(\left( T\left( x_{j_1}^{(1)}, \ldots , x_{j_n}^{(n)}\right)\right)_{j_n\in B^{j_1,\ldots,j_{n-1}}}\right)_{j_{n-1}=1}^{\infty}\ldots\right)_{j_1=1}^{\infty} \in \mathbf{Y}_n(F).
    \end{align*}
    \item[\rm(ii)] The map $\widehat{T}_B \colon X_1(E_1) \times \cdots \times X_n(E_n)\longrightarrow \mathbf{Y}_n(F)$ given by
    \begin{align*}
    	\hspace*{-1.5em}\widehat{T}_B \left( \left( x_{j}^{(1)}\right)_{j=1}^{\infty},\ldots, \left( x_{j}^{(n)}\right)_{j=1}^{\infty} \right) =  \left(\ldots\left(\left( T\left( x_{j_1}^{(1)}, \ldots , x_{j_n}^{(n)}\right)\right)_{j_n\in B^{j_1,\ldots,j_{n-1}}}\right)_{j_{n-1}=1}^{\infty}\ldots\right)_{j_1=1}^{\infty},
    \end{align*}
is a well defined continuous $n$-linear operator. 
    \end{enumerate}
    \label{prop-1}
\end{proposition}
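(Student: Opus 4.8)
The plan is to prove the two implications separately, with essentially all of the work concentrated in $\mathrm{(i)} \Rightarrow \mathrm{(ii)}$.

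The implication $\mathrm{(ii)} \Rightarrow \mathrm{(i)}$ is immediate: the well-definedness demanded in $\mathrm{(ii)}$ asserts precisely that, for arbitrary inputs $(x_j^{(k)})_{j=1}^{\infty}\in X_k(E_k)$, the associated iterated array $\widehat{T}_B\big((x_j^{(1)})_j,\ldots,(x_j^{(n)})_j\big)$ lands in $\mathbf{Y}_n(F)$, which is exactly the content of $\mathrm{(i)}$.

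For $\mathrm{(i)} \Rightarrow \mathrm{(ii)}$, assume the membership in $\mathbf{Y}_n(F)$ holds for all admissible inputs; this is precisely the statement that $\widehat{T}_B\colon X_1(E_1)\times\cdots\times X_n(E_n)\to\mathbf{Y}_n(F)$ is well defined, so it remains to verify $n$-linearity and continuity. For $n$-linearity, I would fix a coordinate $k$ together with sequences in the remaining slots and use that $T$ is $n$-linear, so that each entry $T(\ldots,x_{j_k}^{(k)}+\lambda y_{j_k}^{(k)},\ldots)$ splits as $T(\ldots,x_{j_k}^{(k)},\ldots)+\lambda T(\ldots,y_{j_k}^{(k)},\ldots)$. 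Restriction to the blocks $B^{j_1,\ldots,j_{n-1}}$ and formation of the iterated sequence respect these coordinatewise operations, the interaction of a linear combination with the block restriction being governed by \eqref{eralemma}. Since $\mathrm{(i)}$ guarantees that all three resulting arrays lie in $\mathbf{Y}_n(F)$, and the algebraic operations there are coordinatewise, the identity $\widehat{T}_B(\ldots,x^{(k)}+\lambda y^{(k)},\ldots)=\widehat{T}_B(\ldots,x^{(k)},\ldots)+\lambda\widehat{T}_B(\ldots,y^{(k)},\ldots)$ follows, proving $n$-linearity.

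For continuity I would lean on the embeddings built into the notion of a sequence class. Iterating $Y_m(\cdot)\stackrel{1}{\hookrightarrow}\ell_\infty(\cdot)$ shows that $\mathbf{Y}_n(F)$ embeds continuously into the $n$-fold iterate $\ell_\infty(\ell_\infty(\cdots\ell_\infty(F)))$, so that norm convergence in $\mathbf{Y}_n(F)$ forces convergence in every coordinate; likewise each $X_k(E_k)\stackrel{1}{\hookrightarrow}\ell_\infty(E_k)$ turns norm convergence into coordinatewise convergence in $E_k$. I would then establish separate continuity via the Closed Graph Theorem: fixing all variables but the $k$-th produces a linear map $u_k\colon X_k(E_k)\to\mathbf{Y}_n(F)$, and if $x^{(m)}\to x$ in $X_k(E_k)$ while $u_k(x^{(m)})\to y$ in $\mathbf{Y}_n(F)$, then coordinatewise convergence of the inputs together with the continuity of the fixed operator $T$ forces each coordinate of $u_k(x^{(m)})$ to tend to the corresponding coordinate of $u_k(x)$; comparing with the coordinatewise limit $y$ yields $y=u_k(x)$, so the graph is closed and $u_k$ is bounded. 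Having obtained separate continuity in every variable, I would invoke the standard fact that a separately continuous $n$-linear map between Banach spaces is jointly continuous to conclude that $\widehat{T}_B$ is continuous.

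The main obstacle is this continuity step. The delicate part is organizing the Closed Graph argument around the iterated block structure and checking that convergence in the iterated norm genuinely descends to every coordinate of the multiply-indexed array; it is exactly the reduction to separate continuity, followed by the separate-implies-joint continuity principle, that allows boundedness to be extracted from mere well-definedness.
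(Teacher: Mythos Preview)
Your argument is correct and matches the paper's treatment of the trivial direction and of $n$-linearity (the paper likewise invokes the multilinearity of $T$ together with \eqref{eralemma}). The continuity step, however, is handled differently. The paper applies the Closed Graph Theorem for multilinear operators directly to $\widehat{T}_B$: it takes a sequence $\big((\overline{x}_k^{(1)},\ldots,\overline{x}_k^{(n)})\big)_k$ converging in $X_1(E_1)\times\cdots\times X_n(E_n)$ with $\widehat{T}_B(\overline{x}_k^{(1)},\ldots,\overline{x}_k^{(n)})\to z$ in $\mathbf{Y}_n(F)$, uses the iterated $\ell_\infty$-embeddings and continuity of $T$ to identify each coordinate of $z$, and then cites the multilinear Closed Graph Theorem (see \cite{cecilia}) to conclude. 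You instead freeze all but one variable, apply the \emph{linear} Closed Graph Theorem to obtain separate continuity, and then invoke the classical fact (via iterated Banach--Steinhaus) that separate continuity of a multilinear map between Banach spaces implies joint continuity. Both routes are standard; yours trades a specialized multilinear tool for two more elementary ones, while the paper's is a single step once the multilinear Closed Graph Theorem is available.
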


\begin{proof} Let us check the nontrival implication. That $\widehat{T}_B$ is well defined is obvious and its $n$-linearity follows from the $n$-linearity of $T$ and from (\ref{eralemma}). We shall check the continuity of $\widehat{T}_B$ in the bilinear case $n = 2$. The reasoning will make clear that the general case is analogous (with a much heavier notation). To apply the Closed Graph Theorem, for each $k\in\mathbb{N}$ let  $\overline{x}_k^{(1)} = (x_{k,j}^{(1)})_{j=1}^{\infty}\in X_1(E_1)$ and $\overline{x}_k^{(2)} = (x_{k,j}^{(2)})_{j=1}^{\infty}\in X_2(E_2)$ be such that
\begin{align}
     	 (\overline{x}_k^{(1)} , \overline{x}_k^{(2)}) \stackrel{k}{\longrightarrow} (\overline{x}^{(1)},\overline{x}^{(2)})   & \mbox{ in } X_1(E_1)\times X_2(E_2)
\label{eq-1}
\end{align}
and
\begin{align}
    \widehat{T}_B(\overline{x}_k^{(1)} , \overline{x}_k^{(2)})\stackrel{k}{\longrightarrow} z:= ((z_{j_1,j_2})_{j_2=1}^{\infty})_{j_1=1}^{\infty}  & \mbox{ in }  Y_1(Y_2(F)),
        \label{eq-2}
\end{align}
 where $\overline{x}^{(1)}:=(x^{(1)}_{j})_{j=1}^{\infty}$ and $\overline{x}^{(2)}:=(x^{(2)}_{j})_{j=1}^{\infty}$. Given a pair of indices $(i_1,i_2)\in\mathbb{N}^2$, the projections

$$P_{2,i_2}\colon Y_2(F) \longrightarrow F~,~P_{2,i_2}((y_j)_{j=1}^\infty) = y_{i_2} , $$
$$P_{1,i_1}\colon Y_1(Y_2(F)) \longrightarrow Y_2(F)~,~P_{1,i_1}\left(\left(w_j^i))_{i=1}^\infty\right)_{j=1}^\infty\right) = \left(w_{j_1}^i\right)_{i=1}^\infty , $$
are continuous (norm one) linear operators because $Y_2(F)\stackrel{1}{\hookrightarrow} \ell_{\infty}(F)$ and $Y_1(Y_2(F))\stackrel{1}{\hookrightarrow} \ell_{\infty}(Y_2(F))$. Therefore, from \eqref{eq-2} it follows that
\begin{align}
	P_{2,i_2}\circ P_{1,i_1}\left( \widehat{T}_B(\overline{x}_k^{(1)} , \overline{x}_k^{(2)}) \right) &= P_{2,i_2}\circ P_{1,i_1}\left( \left( \left(T(x^{(1)}_{k,j_1},x^{(2)}_{k,j_2}) \right)_{j_2\in B^{j_1}} \right)_{j_1=1}^{\infty} \right)\nonumber\\
    &= P_{2,i_2}\left( \left(T(x^{(1)}_{k,i_1},x^{(2)}_{k,j_2}) \right)_{j_2\in B^{i_1}}\right)\nonumber\\
    &= \begin{cases} 0,&\mbox{if } i_2\notin B^{i_1}  \\
     T(x^{(1)}_{k,i_1},x^{(2)}_{k,i_2} ),&\mbox{if } i_2\in B^{i_1}
    \end{cases} \stackrel{k}{\longrightarrow} P_{2,i_2}\circ P_{1,i_1}(z) =z_{i_1,i_2}.
    \label{eq-3}
\end{align}
Note that $z_{i_1,i_2}=0$ whenever $i_2\notin B^{i_1}$, because $	P_{2,i_2}\circ P_{1,i_1}\left( \widehat{T}_B(\overline{x}_k^{(1)} , \overline{x}_k^{(2)}) \right) = 0\stackrel{k}{\longrightarrow} z_{i_1,i_2}$.

On the other hand, from $X_l(E_l)\stackrel{1}{\hookrightarrow} \ell_{\infty}(E_l)$ for $l = 1,2$, the convergences in \eqref{eq-1} give, for each pair of indices $(i_1,i_2)\in\mathbb{N}^2$, that
\begin{align*}
		x_{k,i_1}^{(1)}\stackrel{k}{\longrightarrow} x_{i_1}^{(1)} \mbox{ in } E_1 {\rm ~~and~~}        x_{k,i_2}^{(2)}\stackrel{k}{\longrightarrow} x_{i_2}^{(2)} \mbox{ in } E_2,
\end{align*}
so the continuity of $T$ yields
\begin{align*}
	T(x_{k,i_1}^{(1)}, x_{k,i_2}^{(2)})\stackrel{k}{\longrightarrow} T(x_{i_1}^{(1)}, x_{i_2}^{(2)}) \mbox{ in } F {\rm ~for~every~} (i_1,i_2)\in\mathbb{N}^2.
\end{align*}
In particular, for $i_1\in\mathbb{N}$ and $i_2\in B^{i_1}$,
\begin{align}
	T(x_{k,i_1}^{(1)}, x_{k,i_2}^{(2)})\stackrel{k}{\longrightarrow} T(x_{i_1}^{(1)}, x_{i_2}^{(2)}) \mbox{ in } F.
    \label{eq-4}
\end{align}
From \eqref{eq-3} e \eqref{eq-4} we get, for $i_1\in\mathbb{N}$ and $i_2\in B^{i_1}$, $T(x_{i_1}^{(1)},x_{i_2}^{(2)}) = z_{i_1,i_2}$. Since $z_{i_1,i_2}=0$ if $i_2\notin B^{i_1}$, we conclude that
\begin{align*}
	z &:= ((z_{i_1,i_2})_{i_2=1}^{\infty})_{i_1=1}^{\infty} = \left( \left( z_{i_1,i_2}\right)_{i_2\in B^{i_1}}\right)_{i_1=1}^{\infty}  = \left( \left( T(x_{i_1}^{(1)},x_{i_2}^{(2)})\right)_{i_2\in B^{i_1}}\right)_{i_1=1}^{\infty} \\ &= \widehat{T}_B(\overline{x}^{(1)} , \overline{x}^{(2)}).
\end{align*}
By the Closed Graph Theorem for multilinear operators (see, e.g., \cite{cecilia}), it follows that $\widehat{T}_B$ is continuous.
\end{proof}

\begin{definition}\rm \label{def-1} Let
 $X_1,\ldots, X_n$, $Y_1,\ldots, Y_n$ be sequence classes and $B \subseteq \mathbb{N}^n$ be a block. A multilinear operator $T\in\mathcal{L}(E_1,\ldots,E_n;F)$ is said to be {\it $(B;X_1,\ldots,X_n;Y_1,\ldots,Y_n)$-summing} if the equivalent conditions of Proposition \ref{prop-1} hold for $T$. In this case we write   $T\in\mathcal{L}_{B;X_1,\ldots,X_2;Y_1,\ldots,Y_n}(E_1,\ldots,E_n;F)$ and define     \begin{align*}
    	\Vert T\Vert_{B;X_1,\ldots,X_2;Y_1,\ldots,Y_n} = \Vert \widehat{T}_B\Vert .
    \end{align*}
\end{definition}
Properties of the classes defined above will be proved in Section 4.

\section{Examples}
We show how several studied classes of summing multilinear operators can be recovered as particular  cases of our general construction.

\subsection{The diagonal}

 In this subsection we consider the (isotropic) case where the block is the diagonal block $D =\{ (j_1,\ldots , j_n)\in\mathbb{N}^n: j_1=\cdots = j_n\}$. In this case,
	\begin{align*}
		D^{j_1,\ldots,j_{n-1}} = \begin{cases} \{j\} & \mbox{ if } j_1=\cdots = j_{n-1} =:j, \\ ~\,\emptyset & \mbox{otherwise.} \end{cases}	
	\end{align*}
Hence, for any operator $T\in\mathcal{L}(E_1,\ldots,E_n;F)$ and all sequences $(x^k_j)_{j=1}^{\infty}$ in $E_k$, $k=1,\ldots,n$, we have
	\begin{align}\label{eeqq}
		\left( \cdots \left( \left( T\left( x^1_{j_1},\ldots , x^n_{j_n}\right)\right)_{j_n\in D^{j_1,\ldots,j_{n-1}}}\right)_{j_{n-1}=1}^{\infty}\cdots \right)_{j_1=1}^{\infty}= \left( T\left( x^1_j,\ldots ,x^n_j\right)\cdot e^{n-1}_j\right)_{j=1}^{\infty}.
	\end{align}

\begin{definition}\label{deffed}\rm \cite[Definition 3.1]{botelho2017transformation} Given sequence classes $X_1,\ldots,X_n,Y$, an $n$-linear operator $T\in\mathcal{L}(E_1,\ldots,E_n;F)$ is {\it $(X_1,\ldots,X_n;Y)$-summing} if $\left( T(x^1_j,\ldots,x^n_j)\right)_{j=1}^{\infty} \in Y(F)$ whenever $(x^k_j)_{j=1}^{\infty}\in X_k(E_k)$, $k=1,\ldots , n$. The class of all these operators is denoted by $\mathcal{L}_{(X_1,\ldots,X_n;Y)}(E_1, \ldots, E_n;F)$, which is a Banach space with the norm $\Vert T\Vert_{X_1,\ldots,X_n;Y}=\Vert \widehat{T}\Vert$, where $\widehat{T}$ it the induced continuous $n$-linear operator
	\begin{align*}
		\widehat{T}\colon X_1(E_1)\times\cdots\times X_n(E_n)\longrightarrow Y(F)~,~ \widehat{T}\left( (x^1_j)_{j=1}^{\infty},\ldots, (x^n_j)_{j=1}^{\infty}\right) = \left( T(x^1_j,\ldots,x^n_j)\right)_{j=1}^{\infty},
	\end{align*}	
(see \cite[Theorem 2.4]{botelho2017transformation}).
\end{definition}

\begin{theorem}\label{inclusion} Let $X_1,\ldots,X_n, Y$ be sequence classes and $E_1,\ldots, E_n,F$ be Banach spaces.\\	
{\rm (a)} If there exists a sequence class $Z$ such that:
	\begin{align*}
		\left( y_j\cdot e_j\right)_{j=1}^{\infty} \in Y(Z(F)) \Longrightarrow \left( y_j\right)_{j=1}^{\infty} \in Y(F) \mbox{ and } \Vert \left( y_j\right)_{j=1}^{\infty} \Vert_{Y(F)}  \leq \Vert \left( y_j\cdot e_j\right)_{j=1}^{\infty}\Vert_{Y(Z(F))},
		\end{align*}
then
		\begin{align*}
			\mathcal{L}_{D;X_1,\ldots,X_n;Y,Z, \ldots,Z}(E_1,\ldots , E_n;F) \stackrel{1}{\hookrightarrow} \mathcal{L}_{X_1,\ldots , X_n;Y}(E_1,\ldots , E_n;F).
		\end{align*}	
{\rm (b)} If there exists a sequence class $Z$ tal que
	\begin{align*}
		\left( y_j\right)_{j=1}^{\infty} \in Y(F) \Longrightarrow \left( y_j\cdot e_j\right)_{j=1}^{\infty} \in Y(Z(F)) \mbox{ and } \Vert \left( y_j\cdot e_j\right)_{j=1}^{\infty}\Vert_{Y(Z(F))} \leq \Vert \left( y_j\right)_{j=1}^{\infty} \Vert_{Y(F)},
	\end{align*}
	then		
		\begin{align*}
			\mathcal{L}_{X_1,\ldots , X_n;Y}(E_1,\ldots , E_n;F) \stackrel{1}{\hookrightarrow} \mathcal{L}_{D;X_1,\ldots,X_n;Y,Z,\ldots,Z}(E_1,\ldots , E_n;F).
		\end{align*}			
\end{theorem}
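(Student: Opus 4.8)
The plan is to reduce both statements to the identity \eqref{eeqq}. Writing $y_j := T(x^1_j,\ldots,x^n_j)$, that identity says that the induced map $\widehat{T}_D$ of Proposition \ref{prop-1} sends an input $((x^1_j)_j,\ldots,(x^n_j)_j)$ to $(y_j\cdot e_j^{n-1})_{j=1}^\infty\in\mathbf{Y}_n(F)=Y(Z(\cdots Z(F)\cdots))$, whereas the map $\widehat{T}$ of Definition \ref{deffed} sends the same input to $(y_j)_{j=1}^\infty\in Y(F)$. Hence both summability conditions concern the same data $(y_j)_j$, and the sole difference is whether one measures $(y_j)_j$ in $Y(F)$ or $(y_j\cdot e_j^{n-1})_j$ in $\mathbf{Y}_n(F)$. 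The two hypotheses are exactly the two one-sided comparisons between these norms, so each will immediately yield one inclusion together with the correct direction of the norm inequality.

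For part (a) I would first treat $n=2$, where $\mathbf{Y}_2(F)=Y(Z(F))$ and $e_j^{n-1}=e_j$. Given $T\in\mathcal{L}_{D;X_1,X_2;Y,Z}$ and inputs $(x^1_j)_j\in X_1(E_1)$, $(x^2_j)_j\in X_2(E_2)$, condition (i) of Proposition \ref{prop-1} and \eqref{eeqq} give $(y_j\cdot e_j)_j\in Y(Z(F))$; the hypothesis then provides $(y_j)_j\in Y(F)$ with $\|(y_j)_j\|_{Y(F)}\le\|(y_j\cdot e_j)_j\|_{Y(Z(F))}$. By Definition \ref{deffed} this means $\widehat{T}$ is well defined, i.e. $T\in\mathcal{L}_{X_1,X_2;Y}$, and the displayed inequality reads $\|\widehat{T}(\cdot,\cdot)\|_{Y(F)}\le\|\widehat{T}_D(\cdot,\cdot)\|_{Y(Z(F))}$ pointwise. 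Taking the supremum over the product of the unit balls of $X_1(E_1)$ and $X_2(E_2)$ yields $\|T\|_{X_1,X_2;Y}=\|\widehat{T}\|\le\|\widehat{T}_D\|=\|T\|_{D;X_1,X_2;Y,Z}$, which is the asserted norm-one embedding.

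The general case is obtained by iteration. Setting $W_m:=Z(\cdots Z(F)\cdots)$ ($m$ copies of $Z$), so that $W_0=F$ and $\mathbf{Y}_n(F)=Y(W_{n-1})$, and using $y_j\cdot e_j^{n-1}=(y_j\cdot e_j^{n-2})\cdot e_j$, I would apply the hypothesis with $F$ replaced successively by $W_{n-2},W_{n-3},\ldots,W_0$, stripping off one copy of $Z$ at each step and composing the norm inequalities, until $(y_j)_j\in Y(F)$ is reached with the required bound. Part (b) is entirely symmetric: starting from $(y_j)_j\in Y(F)$ one uses the hypothesis of (b) to insert the copies of $Z$ one at a time, producing $(y_j\cdot e_j^{n-1})_j\in\mathbf{Y}_n(F)$ together with the reverse norm inequality, which by \eqref{eeqq} and Proposition \ref{prop-1} says $T\in\mathcal{L}_{D;X_1,\ldots,X_n;Y,Z,\ldots,Z}$ with $\|T\|_{D;\ldots}\le\|T\|_{X_1,\ldots,X_n;Y}$.

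The routine parts are the translation through \eqref{eeqq} and the passage from pointwise output inequalities to operator-norm inequalities by taking suprema over unit balls; continuity of the induced maps is automatic from the characterizations in Proposition \ref{prop-1} and Definition \ref{deffed}. The step I expect to be the main obstacle is precisely the iteration for $n\ge 3$: since each hypothesis is phrased for a single copy of $Z$, handling the $(n-1)$-fold nesting in $\mathbf{Y}_n(F)$ forces one to apply it to the intermediate spaces $W_m$, so the hypothesis must be read as a property of the pair $(Y,Z)$ valid for every Banach space in the role of $F$ (in particular for each $W_m$) rather than for $F$ alone. Once this reading is granted, the induction on the number of copies of $Z$ is mere bookkeeping.
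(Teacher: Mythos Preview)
Your proposal is correct and follows essentially the same route as the paper: both reduce everything to identity \eqref{eeqq} and then apply the hypothesis on $(Y,Z)$ to pass between $(y_j)_{j}$ in $Y(F)$ and $(y_j\cdot e_j^{\,n-1})_{j}$ in $\mathbf{Y}_n(F)$, deducing the pointwise norm inequality and hence the operator-norm inequality. You are in fact slightly more careful than the paper on the point you flag as the main obstacle: the paper applies the hypothesis in a single stroke to go from $Y(Z(\cdots Z(F)\cdots))$ to $Y(F)$ without commenting that this requires iterating the one-step hypothesis with the intermediate spaces $W_m$ in the role of $F$, i.e.\ reading the assumption as a property of the pair $(Y,Z)$ valid for every Banach space---which is indeed the intended reading, as the subsequent definition of $Z$-diagonalizability confirms.
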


\begin{proof}
	{\rm (a)} Let $T\in \mathcal{L}_{D;X_1,\ldots,X_n;Y,Z, \ldots,Z}(E_1,\ldots , E_n;F) $ and $(x^1_j)_{j=1}^{\infty}\in X_1(E_1), \ldots, (x^n_j)_{j=1}^{\infty}\in X_n(E_n)$ be given. From (\ref{eeqq}) and the assumption we have that the sequence
	\begin{align*}
		\left( \ldots \left( \left( T\left( x^1_{j_1},\ldots , x^n_{j_n}\right)\right)_{j_n\in B^{j_1,\ldots,j_{n-1}}}\right)_{j_{n-1}=1}^{\infty}\ldots \right)_{j_1=1}^{\infty}= \left( T\left( x^1_j,\ldots ,x^n_j\right)\cdot e^{n-1}_j\right)_{j=1}^{\infty}
	\end{align*}	
belongs to $Y(Z(\cdots Z(F)\cdots ))$, hence $\left( T\left( x^1_j,\ldots ,x^n_j\right)\right)_{j=1}^{\infty} \in Y(F)$, proving that $T\in \mathcal{L}_{X_1,\ldots , X_n;Y}(E_1,\ldots , E_n;F)$. And using the assumption's norm inequality we get
	\begin{align*}
		\left\| \widehat{T}\left( (x^1_j)_{j=1}^{\infty},\ldots, (x^n_j)_{j=1}^{\infty}\right) \right\| &= \Vert \left(T(x^1_j,\ldots,x^n_j)\right)_{j=1}^{\infty}\Vert_{Y(F)} \\
		&\leq \left\| \left( T(x^1_j,\ldots,x^n_j) \cdot e^{n-1}_j\right)_{j=1}^{\infty}\right\|_{Y\left( Z\left(\cdots Z(F)\right)\cdots\right)} \\
		&= \left\| \left( \cdots \left( \left( T\left( x^1_{j_1},\ldots , x^n_{j_n}\right)\right)_{j_n\in D^{j_1,\ldots,j_{n-1}}}\right)_{j_{n-1}=1}^{\infty}\cdots \right)_{j_1=1}^{\infty}\right\| \\
		&= \left\| \widehat{T}_D\left( (x^1_j)_{j=1}^{\infty},\ldots, (x^n_j)_{j=1}^{\infty}\right) \right\|,
	\end{align*}
which proves that $\Vert T\Vert_{X_1,\ldots , X_n;Y}\leq \Vert T\Vert_{D;X_1,\ldots,X_n;Y,Z,\ldots,Z}$.\\	
{\rm (b)} Let $T\in \mathcal{L}_{X_1,\ldots,X_n;Y}(E_1,\ldots , E_n;F) $ and $(x^1_j)_{j=1}^{\infty}\in X_1(E_1), \ldots, (x^n_j)_{j=1}^{\infty}\in X_n(E_n)$ be given. Then $\left( T( x^1_j,\ldots , x^n_j) \right)_{j=1}^{\infty}\in Y(F)$, and the assumption gives
	\begin{align*}
		\left( T( x^1_j,\ldots , x^n_j) \cdot e_j^{n-1}\right)_{j=1}^{\infty}\in Y(Z(\cdots Z(F)\cdots)).
	\end{align*}	
Calling on (\ref{eeqq}) once again,
	\begin{align*}
		\left( \cdots \left( \left( T\left( x^1_{j_1},\ldots , x^n_{j_n}\right)\right)_{j_n\in D^{j_1,\ldots,j_{n-1}}}\right)_{j_{n-1}=1}^{\infty}\cdots \right)_{j_1=1}^{\infty} \in Y(Z(\cdots Z(F)\cdots)),
	\end{align*}
that is, $T\in  \mathcal{L}_{D;X_1,\ldots,X_n;Y,Z,\ldots,Z}(E_1,\ldots , E_n;F)$. The norm inequality follows from
	\begin{align*}
		&\left\| \widehat{T}_D\left( (x^1_j)_{j=1}^{\infty},\ldots, (x^n_j)_{j=1}^{\infty}\right) \right\|\\ &= \left\| \left( \cdots \left( \left( T\left( x^1_{j_1},\ldots , x^n_{j_n}\right)\right)_{j_n\in D^{j_1,\ldots,j_{n-1}}}\right)_{j_{n-1}=1}^{\infty}\cdots \right)_{j_1=1}^{\infty}\right\|_{Y(Z(\cdots Z(F)\cdots))} \\
		&= \Vert \left( T(x^1_j,\ldots,x^n_j) \cdot e^{n-1}_j\right)_{j=1}^{\infty}\Vert_{Y\left( Z\left(\cdots Z(F)\right)\cdots\right)} \\
		&\leq \Vert \left(T(x^1_j,\ldots,x^n_j)\right)_{j=1}^{\infty}\Vert_{Y(F)} = \Vert \widehat{T}\left( (x^1_j)_{j=1}^{\infty},\ldots, (x^n_j)_{j=1}^{\infty}\right) \Vert.
	\end{align*}
%
\end{proof}

The class we define now is somewhat folklore in the field, for explicit considerations of it, see, e.g., \cite{edinburgh, popa, souza2003aplicaccoes, botelho2017transformation, kim2007}: For $q,p_1,\ldots , q_n\in [1,\infty)$ with $\frac{1}{q}\leq \frac{1}{p_1}+\cdots +\frac{1}{p_n}$, an operator $T\in\mathcal{L}(E_1,\ldots,E_n;F)$ is said to be {\it $(q;p_1,\ldots , p_n)$-weakly summing} if it is $(\ell^w_{p_1}(\cdot), \ldots, \ell^w_{p_n}(\cdot);\ell_q^w(\cdot))$-summing according to Definition \ref{deffed}.  
The class of all these operators is denoted by $\mathcal{L}_{ws,(q;p_1,\ldots,p_n)}(E_1,\ldots,E_n;F)$. Every continuous multilinear operator is $(1;1, \ldots,1)$-weakly summing, but this does not hold for other parameters (see \cite[Theorem 4.3]{botelho2017transformation}).   

\begin{corollary} Let $q,p_1,\ldots , q_n\in [1,\infty)$ be such that $\frac{1}{q}\leq \frac{1}{p_1}+\ldots +\frac{1}{p_n}$, Then, for all Banach spaces $E_1, \ldots, E_n,F$:\\ 
{\rm (a)} $ \mathcal{L}_{(D; \ell^w_{p_1}(\cdot),\ldots , \ell^w_{p_n}(\cdot); \ell^w_q(\cdot),\ell_1(\cdot) ,\ldots , \ell_1(\cdot))}(E_1,\ldots,E_n;F) \stackrel{1}{\hookrightarrow} \mathcal{L}_{ws,(q;p_1,\ldots,p_n)}(E_1,\ldots,E_n;F) $.\\
{\rm (b)} $\mathcal{L}_{ws,(q;p_1,\ldots,p_n)}(E_1,\ldots,E_n;F)\stackrel{1}{\hookrightarrow} \mathcal{L}_{(D; \ell^w_{p_1}(\cdot),\ldots , \ell^w_{p_n}(\cdot); \ell^w_q(\cdot),\ell_{\infty}(\cdot) ,\ldots , \ell_{\infty}(\cdot))}(E_1,\ldots,E_n;F)$.
\end{corollary}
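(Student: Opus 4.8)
The plan is to recognize both inclusions as direct instances of Theorem~\ref{inclusion}, applied with the sequence classes $X_k = \ell^w_{p_k}(\cdot)$ for $k=1,\ldots,n$ and $Y = \ell^w_q(\cdot)$, and with the auxiliary class $Z$ chosen as $\ell_1(\cdot)$ for part (a) and $\ell_\infty(\cdot)$ for part (b). Since by definition $\mathcal{L}_{ws,(q;p_1,\ldots,p_n)} = \mathcal{L}_{\ell^w_{p_1}(\cdot),\ldots,\ell^w_{p_n}(\cdot);\ell^w_q(\cdot)}$ in the sense of Definition~\ref{deffed}, the whole corollary reduces to verifying, for these two choices of $Z$, the hypotheses of Theorem~\ref{inclusion}. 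Concretely, writing the weak norm of a sequence $(y_j)_{j=1}^\infty$ in a Banach space $G$ as $\|(y_j)_{j=1}^\infty\|_{\ell^w_q(G)} = \sup_{\psi \in B_{G^*}}\left(\sum_{j}|\psi(y_j)|^q\right)^{1/q}$, I must compare the weak-$q$ norm of $(y_j)_{j=1}^\infty$ in $F$ with that of its diagonalization $(y_j\cdot e_j)_{j=1}^\infty$ regarded as an element of $\ell^w_q(\ell_1(F))$, respectively of $\ell^w_q(\ell_\infty(F))$.

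For part (a) I would verify hypothesis (a) of Theorem~\ref{inclusion}, namely $\|(y_j)_{j=1}^\infty\|_{\ell^w_q(F)} \le \|(y_j\cdot e_j)_{j=1}^\infty\|_{\ell^w_q(\ell_1(F))}$. First I would use the single (``diagonal'') functionals: given $\phi\in B_{F^*}$, the map $\Phi_\phi\colon \ell_1(F)\to\mathbb{K}$ defined by $\Phi_\phi\big((w_i)_{i=1}^\infty\big) = \sum_{i}\phi(w_i)$ satisfies $\|\Phi_\phi\|\le 1$, because $\big|\sum_i\phi(w_i)\big|\le\|\phi\|\sum_i\|w_i\|$, and it acts on the diagonal vectors by $\Phi_\phi(y_j\cdot e_j)=\phi(y_j)$. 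Hence $\left(\sum_j|\phi(y_j)|^q\right)^{1/q} = \left(\sum_j|\Phi_\phi(y_j\cdot e_j)|^q\right)^{1/q}\le \|(y_j\cdot e_j)_{j=1}^\infty\|_{\ell^w_q(\ell_1(F))}$, and taking the supremum over $\phi\in B_{F^*}$ gives exactly the required inequality (finiteness of the left-hand side being inherited from that of the right-hand side). Theorem~\ref{inclusion}(a) then yields the inclusion of the corollary.

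For part (b) I would verify hypothesis (b) of Theorem~\ref{inclusion}, namely $\|(y_j\cdot e_j)_{j=1}^\infty\|_{\ell^w_q(\ell_\infty(F))}\le \|(y_j)_{j=1}^\infty\|_{\ell^w_q(F)}$. Fix $\psi\in B_{(\ell_\infty(F))^*}$ and set $\phi_j := \psi\circ\iota_j$, where $\iota_j\colon F\to\ell_\infty(F)$, $\iota_j(y)=y\cdot e_j$, is the canonical isometry, so that $\psi(y_j\cdot e_j)=\phi_j(y_j)$. The crucial step is to exploit the disjoint supports of the vectors $y_j\cdot e_j$: for any finite set $S\subseteq\mathbb{N}$ and unimodular scalars $\lambda_j$ chosen so that $\lambda_j\phi_j(y_j)=|\phi_j(y_j)|$, the element $w=\sum_{j\in S}\lambda_j\,y_j\cdot e_j$ has $\|w\|_{\ell_\infty(F)}=\max_{j\in S}\|y_j\|$, whence $\sum_{j\in S}|\phi_j(y_j)| = |\psi(w)|\le\|w\|_{\ell_\infty(F)}\le\sup_j\|y_j\|$. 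Letting $S$ grow gives $\sum_j|\phi_j(y_j)|\le\sup_j\|y_j\|$, and since $q\ge 1$ the contraction $\left(\sum_j|\phi_j(y_j)|^q\right)^{1/q}\le\sum_j|\phi_j(y_j)|$ together with $\sup_j\|y_j\|=\|(y_j)_{j=1}^\infty\|_{\ell_\infty(F)}\le\|(y_j)_{j=1}^\infty\|_{\ell^w_q(F)}$ delivers the desired bound after taking the supremum over $\psi$. Theorem~\ref{inclusion}(b) then yields the second inclusion.

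I expect the main obstacle to be precisely the estimate in part (b). The naive bound, using only that each individual functional satisfies $\|\phi_j\|\le 1$, is far too weak (it would produce an uncontrolled constant, as the $\phi_j$ could act independently on different coordinates). The point that makes the argument work is that, because the $y_j\cdot e_j$ live on disjoint coordinates, testing $\psi$ against their phase-adjusted finite combinations forces the much stronger $\ell_1$-type inequality $\sum_j|\phi_j(y_j)|\le\sup_j\|y_j\|$; it is exactly here that the hypothesis $q\ge 1$ is used, to pass from this $\ell_1$ control to the required $\ell_q$ control.
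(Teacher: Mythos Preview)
Your proof is correct, and like the paper you reduce both inclusions to Theorem~\ref{inclusion} with $Y=\ell_q^w(\cdot)$ and $Z=\ell_1(\cdot)$ resp.\ $Z=\ell_\infty(\cdot)$; the difference lies in how the two required inequalities on $(y_j\cdot e_j)_j$ are verified.

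The paper works throughout with the equivalent description
\[
\|(y_j)_j\|_{w,q}=\sup_{N}\sup_{(a_j)\in B_{\ell_{q'}}}\Big\|\sum_{j=1}^N a_j y_j\Big\|,
\]
whereas you use the dual-functional definition $\|(y_j)_j\|_{\ell_q^w(G)}=\sup_{\psi\in B_{G^*}}(\sum_j|\psi(y_j)|^q)^{1/q}$. For (a) the paper simply applies the triangle inequality in $F$, $\|\sum a_j y_j\|_F\le\sum\|a_j y_j\|=\|\sum a_j y_j\cdot e_j\|_{\ell_1(F)}$, which is the coefficient-side mirror of your construction of $\Phi_\phi\in B_{(\ell_1(F))^*}$. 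For (b) the paper's computation is more direct: it evaluates $\|\sum a_j y_j\cdot e_j\|_{\ell_\infty(F)}=\sup_k|a_k|\,\|y_k\|$, maximizes over $(a_j)\in B_{\ell_{q'}}$ (the sup is $1$), and obtains the \emph{equality} $\|(y_j\cdot e_j)_j\|_{\ell_q^w(\ell_\infty(F))}=\|(y_j)_j\|_{\ell_\infty(F)}$. Your disjoint-support argument, yielding the $\ell_1$ bound $\sum_j|\phi_j(y_j)|\le\sup_j\|y_j\|$ and then passing to $\ell_q$ via $\|\cdot\|_q\le\|\cdot\|_1$, is a valid alternative; it is slightly longer and gives only the inequality, but it has the merit of avoiding the coefficient formula altogether and of making explicit where $q\ge 1$ is used.
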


\begin{proof}
	{\rm (a)} 
 Let $(y_j)_{j=1}^{\infty}$ be a sequence in $F$ such that $(y_j\cdot e_j)_{j=1}^{\infty}\in\ell_p^w(\ell_1(F))$. Denoting by $q'$ the conjugate of $q$,  from
	\begin{align*}
		\Vert (y_j)_{j=1}^{\infty}\Vert_{w,p}  &= \sup_{N\in\mathbb{N}}\sup_{(a_j)_{j=1}^{\infty}\in B_{\ell_{p'}}}\Big\Vert \sum_{j=1}^N a_jy_j\Big\Vert_{E} \leq \sup_{N\in\mathbb{N}}\sup_{(a_j)_{j=1}^{\infty}\in B_{\ell_{p'}}}  \sum_{j=1}^N \Vert a_jy_j\Vert_{E} \\
&= \sup_{N\in\mathbb{N}}\sup_{(a_j)_{j=1}^{\infty}\in B_{\ell_{p'}}}\Big\Vert \sum_{j=1}^N a_jy_j\cdot e_j\Big\Vert_{\ell_1(E)}
		=  \Vert (y_j\cdot e_j)_{j=1}^{\infty}\Vert_{w,p},
	\end{align*}
we conclude that $(y_j)_{j=1}^{\infty}\in\ell_p^w(F)$ and $\Vert (y_j)_{j=1}^{\infty}\Vert_{w,p}\leq \Vert (y_j\cdot e_j)_{j=1}^{\infty}\Vert_{w,p}$. The result follows from Theorem \ref{inclusion}(a).\\	
	{\rm (b)} Let $(y_j)_{j=1}^{\infty}$ be a sequence in $F$ such that
%
$(y_j)_{j=1}^{\infty}\in \ell_p^w(F)$. From
	\begin{align*}
		\Vert (y_j\cdot e_j)_{j=1}^{\infty}\Vert_{w,p} &= \sup_{N\in\mathbb{N}}\sup_{(a_j)_{j=1}^{\infty}\in B_{\ell_{p'}}}\Big\Vert \sum_{j=1}^N a_jy_j\cdot e_j\Big\Vert_{\ell_{\infty}(F)} \\
		&= \sup_{N\in\mathbb{N}}\sup_{(a_j)_{j=1}^{\infty}\in B_{\ell_{p'}}} \sup_{1\leq k\leq N}\Vert a_ky_k\Vert_F \\
		&= \sup_{N\in\mathbb{N}} \sup_{1\leq k\leq N} \sup_{(a_j)_{j=1}^{\infty}\in B_{\ell_{p'}}} \Vert a_ky_k\Vert_F \\
		&= \sup_{N\in\mathbb{N}} \sup_{1\leq k\leq N} \sup_{(a_j)_{j=1}^{\infty}\in B_{\ell_{p'}}} \vert a_k\vert \cdot \Vert y_k\Vert_F \\
		&= \sup_{N\in\mathbb{N}} \sup_{1\leq k\leq N} \Vert y_k\Vert_F \left(\sup_{(a_j)_{j=1}^{\infty}\in B_{\ell_{p'}}} \vert a_k\vert \right)\\
		&= \sup_{N\in\mathbb{N}} \sup_{1\leq k\leq N} \Vert y_k\Vert_F = \Vert (y_j)_{j=1}^{\infty}\Vert_{\ell_{\infty}(F)}\leq \Vert (y_j)_{j=1}^{\infty}\Vert_{w,p},
	\end{align*}
we conclude that $(y_j\cdot e_j)_{j=1}^{\infty}\in \ell_p^w(\ell_{\infty}(F))$ e $\Vert (y_j\cdot e_j)_{j=1}^{\infty}\Vert_{w,p}\leq \Vert (y_j)_{j=1}^{\infty}\Vert_{w,p}$. The result follows from Theorem \ref{inclusion}(a). 	\end{proof}	

\begin{definition}\rm A sequence class $Y$ is {\it $Z$-diagonalizable}, where $Z$ is a sequence class, if, regardless of the Banach space $F$ and the sequence $(y_j)_{j=1}^\infty$ in $Y$,
	\begin{align*}
		\left( y_j\right)_{j=1}^{\infty} \in Y(F) \Longleftrightarrow \left( y_j\cdot e_j\right)_{j=1}^{\infty} \in Y(Z(F)) \mbox{ and } \Vert \left( y_j\cdot e_j\right)_{j=1}^{\infty}\Vert_{Y(Z(F))} = \Vert \left( y_j\right)_{j=1}^{\infty} \Vert_{Y(F)}.
	\end{align*}
\end{definition}

\begin{example}\rm The sequence classes $c_0(\cdot)$, $\ell_{\infty}(\cdot)$ and $\ell_p(\cdot)$, $1 \leq p < \infty$, are $Z$-diagonalizable for every sequence class $Z$. Indeed, for any sequence class $Z$, $\Vert y\cdot e_j\Vert_{Z(F)} = \Vert y\Vert_F$ para all $y\in F$ and  $j\in\mathbb{N}$.
\end{example}

Our general construction recovers that classes of $(X_1, \ldots, X_n;Y)$-summing operators whenever the sequence class $Y$ is diagonalizable:

\begin{corollary}\label{roc} Let $X_1,\ldots,X_n$ be sequences classes and let $Y$ be a $Z$-diagonalizable sequence class. For all Banach spaces $E_1,\ldots, E_n,F$, 
		\begin{align*}
			\mathcal{L}_{D;X_1,\ldots,X_n;Y,Z, \ldots,Z}(E_1,\ldots , E_n;F) \stackrel{1}{=} \mathcal{L}_{X_1,\ldots , X_n;Y}(E_1,\ldots , E_n;F).
		\end{align*}
\end{corollary}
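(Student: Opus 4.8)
The plan is to obtain the isometric equality by combining the two inclusions furnished by Theorem \ref{inclusion}, observing that the $Z$-diagonalizability of $Y$ supplies the hypotheses of \emph{both} parts (a) and (b) simultaneously.

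First I would record what the defining equivalence of $Z$-diagonalizability gives. For an arbitrary Banach space $F$ and an arbitrary $F$-valued sequence $(y_j)_{j=1}^\infty$, it asserts that $(y_j)_{j=1}^\infty \in Y(F)$ if and only if $(y_j \cdot e_j)_{j=1}^\infty \in Y(Z(F))$, together with the norm identity $\|(y_j \cdot e_j)_{j=1}^\infty\|_{Y(Z(F))} = \|(y_j)_{j=1}^\infty\|_{Y(F)}$. Reading this identity as two inequalities, the forward implication paired with the estimate $\|(y_j)_{j=1}^\infty\|_{Y(F)} \le \|(y_j \cdot e_j)_{j=1}^\infty\|_{Y(Z(F))}$ is exactly the hypothesis of Theorem \ref{inclusion}(a), while the backward implication paired with the reverse estimate $\|(y_j \cdot e_j)_{j=1}^\infty\|_{Y(Z(F))} \le \|(y_j)_{j=1}^\infty\|_{Y(F)}$ is exactly the hypothesis of Theorem \ref{inclusion}(b).

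Consequently, Theorem \ref{inclusion}(a) yields $\mathcal{L}_{D;X_1,\ldots,X_n;Y,Z,\ldots,Z}(E_1,\ldots,E_n;F) \stackrel{1}{\hookrightarrow} \mathcal{L}_{X_1,\ldots,X_n;Y}(E_1,\ldots,E_n;F)$, and Theorem \ref{inclusion}(b) yields the reverse inclusion $\mathcal{L}_{X_1,\ldots,X_n;Y}(E_1,\ldots,E_n;F) \stackrel{1}{\hookrightarrow} \mathcal{L}_{D;X_1,\ldots,X_n;Y,Z,\ldots,Z}(E_1,\ldots,E_n;F)$. The first gives that the two spaces coincide as sets with $\|T\|_{X_1,\ldots,X_n;Y} \le \|T\|_{D;X_1,\ldots,X_n;Y,Z,\ldots,Z}$, and the second gives the opposite norm inequality; together they force equality of the norms, which is precisely the assertion $\stackrel{1}{=}$.

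There is essentially no genuine obstacle here: the entire content is the bookkeeping observation that the single norm \emph{equality} built into $Z$-diagonalizability splits into the two one-sided estimates demanded separately by the two halves of Theorem \ref{inclusion}. The only points requiring care are matching the direction of each inequality to the correct part of that theorem, and noting that the hypotheses of Theorem \ref{inclusion} must hold for every $F$, which they do, since $Z$-diagonalizability is stated to hold regardless of the Banach space $F$.
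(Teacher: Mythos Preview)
Your proposal is correct and is precisely the argument the paper intends: the corollary is stated without proof because it follows immediately from Theorem~\ref{inclusion} once one observes that $Z$-diagonalizability simultaneously supplies the hypotheses of both parts (a) and (b). Your explicit remark that the diagonalizability assumption holds for every Banach space (hence for the iterated spaces $Z(\cdots Z(F)\cdots)$ implicitly needed in the proof of Theorem~\ref{inclusion}) is a welcome point of care.
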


In the next example we recover, as a particular instance of our framework, the first class of summing multilinear operators to be studied. Its definition goes back to Pietsch \cite{pietsch1983ideals} and several authors have been exploring this class since then.

\begin{example}[Absolutely summing operators] \rm
	For $1\leq p_1,\ldots, p_n,q<\infty$ with $\frac{1}{q} \leq \frac{1}{p_1} + \cdots + \frac{1}{p_n}$, an operator $T\in\mathcal{L}(E_1,\ldots,E_n;F)$ is {\it absolutely $(q;p_1,\ldots,p_n)$-summing} if it is $(\ell^w_{p_1}(\cdot), \ldots, \ell^w_{p_n}(\cdot);\ell_q(\cdot))$-summing according to Definition \ref{deffed}. The original definition concerns finite sequences, but the equivalence we have just stated is well known. 
 Since $\ell_q(\cdot)$ is diagonalizable, from Corollary \ref{roc} we have that, for any sequence class $Z$, a continuous multilinear operator is absolutely $(q;p_1,\ldots,p_n)$-summing if and only if it is $$(D;\ell^w_{p_1}(\cdot),\ldots , \ell^w_{p_n}(\cdot);\ell_q(\cdot),Z,\ldots,Z){\rm -summing},$$ and the corresponding summing norms coincide. 
\label{exe-1}
\end{example}

Other classes studied in the literature can be recovered using the diagonal block. We just give one more example.

\begin{example}[Cotype $q$ operators] \rm
Given	$1 \leq q<\infty$, an operador $T\in \mathcal{L}(E_1,\ldots,E_n;F)$ has {\it cotype $q$} if there exists a constant $C> 0$ such that, for every $n$ and all $(x^1_j,\ldots x^n_j) \in E_1\times\ldots\times E_n$, $j=1,\ldots,m$,
	\begin{align*}
		\Big\Vert \left( T(x^1_j,\ldots,x^n_j)\right)_{j=1}^m \Big\Vert_q \leq C\prod_{k=1}^n\Vert (x^k_j)_{j=1}^m\Vert_{Rad(E)}.
	\end{align*}	
The infimum of the constants $C$ defines a complete norm on the class $\mathcal{C}^n_q(E_1,\ldots,E_n;F)$ of all $n$-linear operators from  $E_1\times\ldots\times E_n$ to $F$ having cotype $q$. Bearing the notation of Definition \ref{deffed} in mind, in \cite[Theorem 2.6]{botelho2016type} it is proved that
$$\mathcal{C}^n_q(E_1,\ldots,E_n;F) \stackrel{1}{=} \mathcal{L}_{Rad(\cdot),\ldots , Rad(\cdot); \ell_q(\cdot)}(E_1,\ldots,E_n;F).$$
Since $\ell_q(\cdot)$ is $Z$-diagonalizable for every sequence class $Z$, Corollary \ref{roc} gives
	\begin{align*}
		\mathcal{C}^n_q(E_1,\ldots,E_n;F) \stackrel{1}{=} \mathcal{L}_{D;Rad(\cdot),\ldots , Rad(\cdot); \ell_q(\cdot),Z,\ldots,Z}(E_1,\ldots,E_n;F),
	\end{align*}
for every sequence class $Z$. 
\end{example}

\subsection{Multiple summing operators}
In this subsection we consider the block $\mathbb{N}^n$, that is, the whole matrix. In this case,
 $$(\mathbb{N}^{n})^{j_1,\ldots,j_{n-1}} = \mathbb{N} {\rm ~for~all~} j_1,\ldots , j_{n-1}\in\mathbb{N}.$$
Since we are summing in all indices, in this case we can consider the isotropic and the anisotropic cases.

Let us consider the anisotropic case first, from which the isotropic case will follow. The next class was studied in \cite{nacibarXiv2019, annfunctanal, albuquerque2014summability, nacibjfa, araujo2016some}.

\begin{example}\rm For
	$1\leq p_1,\ldots,p_n,q_1, \ldots, q_n <\infty$ with $p_k\leq q_k$,  $k = 1, \ldots, n$, an operator $T\in\mathcal{L}(E_1,\ldots, E_n;F)$ is {\it multiple $(q_1, \ldots, q_n; p_1, \ldots, p_n)$-summing} if the following implication holds: $(x_j^{(k)})_{j=1}^\infty \in \ell^w_{p_k}(E_k), \,k=1, \ldots, n \Longrightarrow $
$$\sum_{j_1=1}^{\infty} \left(\sum_{j_2=1}^{\infty} \cdots \left( \sum_{j_n=1}^{\infty} \Vert T(x_{j_1}^{(1)},\ldots,x_{j_n}^{(n)}) \Vert_{F}^{q_n} \right)^{\frac{q_{n-1}}{q_n}} \cdots\right)^{\frac{q_1}{q_2}}   < \infty.
$$
Choosing $X_k = \ell_{p_k}^w(\cdot)$ and $Y_k = \ell_{q_k}(\cdot)$ $k=1,\ldots, n$,  an operator $T\in\mathcal{L}(E_1,\ldots, E_n;F)$ is $(\mathbb{N}^{n};X_1,\ldots,X_n;Y_1,\ldots,Y_n)$-summing if, for all sequences  $(x_{j_1}^{(1)})_{j_1=1}^{\infty}\in X_1(E_1),\ldots, (x_{j_n}^{(n)})_{j_2=1}^{\infty}\in X_n(E_n)$ it holds
\begin{align*}
		\left(\cdots\left(\left( T\left(x_{j_1}^{(1)},\ldots,x_{j_n}^{(n)}\right)\right)_{j_n\in (\mathbb{N}^{n})^{j_1, \ldots, j_{n-1}}}\right)_{j_{n-1}=1}^{\infty}\cdots\right)_{j_1=1}^{\infty} = \\
		=\left(\cdots\left(\left( T\left(x_{j_1}^{(1)},\ldots,x_{j_n}^{(n)}\right)\right)_{j_n =1}^{\infty}\right)_{j_{n-1}=1}^{\infty}\cdots\right)_{j_1=1}^{\infty} \in Y_1(\cdots Y_n(F)\cdots) ,
	\end{align*}
what happens if and only if
\begin{align*}
	\infty &> \Big\Vert \left(\cdots \left( T\left(x_{j_1}^{(1)},\ldots,x_{j_n}^{(n)}\right)\right)_{j_n =1}^{\infty} \cdots\right)_{j_1=1}^{\infty} \Big\Vert_{Y_1(\cdots Y_n(F)\cdots)} \\
	&= \Big\Vert \left(\cdots \left( T\left(x_{j_1}^{(1)},\ldots,x_{j_n}^{(n)}\right)\right)_{j_n =1}^{\infty} \cdots\right)_{j_1=1}^{\infty} \Big\Vert_{\ell_{q_1}(\cdots \ell_{q_n}(F)\cdots)} \\
	&= \left( \sum_{j_1=1}^{\infty} \Big\Vert \left(\cdots \left( T\left(x_{j_1}^{(1)},\ldots,x_{j_n}^{(n)}\right)\right)_{j_n =1}^{\infty}\cdots\right)_{j_2=1}^{\infty} \Big\Vert_{\ell_{q_2}(\cdots \ell_{q_n}(F)\cdots)}^{q_1} \right)^{\frac{1}{{q_1}}} \\
	&= 	\left( \sum_{j_1=1}^{\infty} \left( \left(\sum_{j_2=1}^{\infty} \Big\Vert \left(\cdots \left( T\left(x_{j_1}^{(1)},\ldots,x_{j_n}^{(n)}\right)\right)_{j_n =1}^{\infty}\cdots\right)_{j_3=1}^{\infty} \Big\Vert_{\ell_{q_3}(\cdots \ell_{q_n}(F)\cdots)}^{q_2} \right)^{\frac{1}{q_2}} \right)^{q_1} \right)^{\frac{1}{{q_1}}} \\
	&= 	\left( \sum_{j_1=1}^{\infty} \left(\sum_{j_2=1}^{\infty} \Big\Vert \left(\cdots \left( T\left(x_{j_1}^{(1)},\ldots,x_{j_n}^{(n)}\right)\right)_{j_n =1}^{\infty}\cdots\right)_{j_3=1}^{\infty} \Big\Vert_{\ell_{q_3}(\cdots \ell_{q_n}(F)\cdots)}^{q_2} \right)^{\frac{q_1}{q_2}}  \right)^{\frac{1}{{q_1}}}  = \cdots\\
	&= \left( \sum_{j_1=1}^{\infty} \left(\sum_{j_2=1}^{\infty} \cdots \left( \sum_{j_n=1}^{\infty} \Vert T(x_{j_1}^{(1)},\ldots,x_{j_n}^{(n)}) \Vert_{F}^{q_n} \right)^{\frac{q_{n-1}}{q_n}} \cdots\right)^{\frac{q_1}{q_2}}  \right)^{\frac{1}{{q_1}}}.
    \end{align*}
    Therefore, $T\in\mathcal{L}(E_1,\ldots,E_n;F)$ is multiple  $(q_1, \ldots, q_n;p_1,\ldots,p_n)$-summing if and only if $T$ is  $(\mathbb{N}^{n};X_1,\ldots,X_n;Y_1,\ldots,Y_n)$-summing.
\end{example}

Turning to the isotropic case, we now recover the celebrated class of multiple summing multilinear operators (see the Introduction), 
which was introduced, independently, by Matos \cite{matos2003fully} and Bombal, Villanueva and P\'erez-Garc\'ia \cite{bombal2004multilinear}.

\begin{example}\rm For
	$1\leq p_1,\ldots,p_n,q<\infty$ with $q\geq p_k$, $k = 1, \ldots, n$, an operator $T\in\mathcal{L}(E_1,\ldots, E_n;F)$ is {\it multiple $(q; p_1, \ldots, p_n)$-summing} if the following implication holds:
$$(x_j^{(k)})_{j=1}^\infty \in \ell^w_{p_k}(E_k), \,k=1, \ldots, n \Longrightarrow  \sum_{j_1,\ldots,j_n=1}^{\infty} \Vert T(x_{j_1}^{(1)},\ldots,x_{j_n}^{(n)})\Vert_{F}^q < \infty.
$$
According to the previous example, it is clear that $T$ is multiple $(q; p_1, \ldots, p_n)$-summing if and only if it is multiple $(q,\ldots, q; p_1, \ldots, p_n)$-summing. So, choosing $X_k = \ell_{p_k}^w(\cdot)$, $k=1,\ldots, n$, and  $Y = \ell_q(\cdot)$, an operator $T\in\mathcal{L}(E_1,\ldots, E_n;F)$ is multiple  $(q;p_1,\ldots,p_n)$-summing if and only if $T$ is  $(\mathbb{N}^{n};X_1,\ldots,X_n;Y,\ldots,Y)$-summing.
\label{exe-2}
\end{example}

\subsection{Multiple summing operators with respect to partitions}
The anisotropic class we study in this subsection, which was introduced in \cite{araujo2016some} and developed in \cite{annfunctanal, nacibarXiv2019}, is interesting only for $n \geq 3$, for in the bilinear case it collapses either to the diagonal case or to the multiple summing case. Next we show that its trilinear case is a particular case of our general approach.

\begin{example} \rm Let $\mathcal{I}=\{I_1,I_2\}$ be the partition of $\{1,2,3\}$ where $I_1=\{1,2\}$ and $I_2 = \{3\}$, and let $p_1,p_2, p_3, q_1,q_2 \in [ 1,\infty)$ be such that $\frac{1}{q_k}\leq \sum\limits_{i\in I_k}\frac{1}{p_i}$, $k=1,2$. Instead of the defintion (see, e.g., \cite[Definition 5.12]{araujo2016some}), we shall use the characterization proved in \cite[Proposition 5.14]{araujo2016some}: an operator $T\in\mathcal{L}(E_1,E_2,E_3;F)$ is {\it partially $\cal I$-$(p_1, p_2,p_3;q_1,q_2)$-summing} if and only if
$$\left(\sum_{j_1=1}^{\infty}\left(\sum_{j_2=1}^{\infty} \Big\Vert T\left( \sum_{k=1}^2 \sum_{i\in I_k} x_{j_k}^{(i)}\cdot e_i \right) \Big\Vert_{F}^{q_2}\right)^{\frac{q_1}{q_2}}\right)^{\frac{1}{q_1}}< \infty $$
for all sequences $(x_{j_k}^{(k)})_{j_k=1}^{\infty}\in \ell_{p_k}^w(E_k)$, $k=1,2,3$. Choosing    $X_1 = \ell_{p_1}^w(\cdot)$, $X_2 = \ell_{p_2}^w(\cdot)$, $X_3 = \ell_{p_3}^w(\cdot)$, $Y_1 = \ell_{q_1}(\cdot)$, $Y_2 = \ell_{q_2}(\cdot)$, $Y_3 = \ell_{\infty}(\cdot)$, and the block $B = \{(j_1,j_2,j_3)\in\mathbb{N}^3: j_1=j_2\}$, we have, for all $j_1, j_2 \in \mathbb{N}$,
    \begin{align*}
        B^{j_1,j_2} &= \{ j_3\in\mathbb{N} : (j_1,j_2,j_3)\in B \} =\begin{cases} \emptyset,& \mbox{ se } j_1\neq j_2  \\  \mathbb{N}, & \mbox{ se } j_1= j_2.
    \end{cases}
    \end{align*}
So, an operator $T\in\mathcal{L}(E_1,E_2,E_3;F)$ is $(B;X_1,X_2,X_3;Y_1,Y_2,Y_3)$-summing if, given sequences $(x_{j_k}^{(k)})_{j_k=1}^{\infty}\in X_k(E_k)$, $k=1,2,3$, it holds 
    \begin{align*}
        \left(\left(\left(T(x_{j_1}^{(1)},x_{j_2}^{(2)},x_{j_3}^{(3)})\right)_{j_3\in B^{j_1,j_2}}\right)_{j_2=1}^{\infty}\right)_{j_1=1}^{\infty} \in \ell_{\infty}\left(\ell_{q_1}\left(\ell_{q_2}(F)\right)\right),
    \end{align*}
what happens if and only if
    \begin{align*}
        \infty  &> \sup_{j_1\in\mathbb{N}}\Big\Vert\left(\left(T(x_{j_1}^{(1)},x_{j_2}^{(2)},x_{j_3}^{(3)})\right)_{j_3 \in B^{j_1,j_2}} \right)_{j_2=1}^{\infty} \Big\Vert_{\ell_{q_1}\left(\ell_{q_2}(F)\right)} \\
        &= \sup_{j_1\in\mathbb{N}}\left(\sum_{j_2=1}^{\infty}\Big\Vert\left(T(x_{j_1}^{(1)},x_{j_2}^{(2)},x_{j_3}^{(3)})\right)_{j_3\in B^{j_1,j_2}}\Big\Vert_{\ell_{q_2}(F)}^{q_1}\right)^{\frac{1}{q_1}}  \\
        &= \left(\sum_{j_2=1}^{\infty}\Big\Vert\left(T(x_{j_2}^{(1)},x_{j_2}^{(2)},x_{j_3}^{(3)})\right)_{j_3\in B^{j_2,j_2}}\Big\Vert_{\ell_{q_2}(F)}^{q_1}\right)^{\frac{1}{q_1}} \\
        &= \left(\sum_{j_2=1}^{\infty}\Big\Vert\left(T(x_{j_2}^{(1)},x_{j_2}^{(2)},x_{j_3}^{(3)})\right)_{j_3=1}^{\infty}\Big\Vert_{\ell_{q_2}(F)}^{q_1}\right)^{\frac{1}{q_1}} \\
        &= \left(\sum_{j_2=1}^{\infty}\left(\sum_{j_3=1}^{\infty} \Big\Vert T(x_{j_2}^{(1)},x_{j_2}^{(2)},x_{j_3}^{(3)})\Big\Vert_{F}^{q_2}\right)^{\frac{q_1}{q_2}}\right)^{\frac{1}{q_1}} \\
        &= \left(\sum_{j_1=1}^{\infty}\left(\sum_{j_2=1}^{\infty} \Big\Vert T(x_{j_1}^{(1)},x_{j_1}^{(2)},x_{j_2}^{(3)})\Big\Vert_{F}^{q_2}\right)^{\frac{q_1}{q_2}}\right)^{\frac{1}{q_1}}\\
        &= \left(\sum_{j_1=1}^{\infty}\left(\sum_{j_2=1}^{\infty} \Big\Vert T\left( x_{j_1}^{(1)}\cdot e_1+x_{j_1}^{(2)}\cdot e_2 + x_{j_2}^{(3)} \cdot e_3 \right) \Big\Vert_{F}^{q_2}\right)^{\frac{q_1}{q_2}}\right)^{\frac{1}{q_1}}\\
        &= \left(\sum_{j_1=1}^{\infty}\left(\sum_{j_2=1}^{\infty} \Big\Vert T\left( \sum_{i\in\{1,2\}}  x_{j_1}^{(i)}\cdot e_i+\sum_{i\in\{3\}} x_{j_2}^{(i)} \cdot e_i \right) \Big\Vert_{F}^{q_2}\right)^{\frac{q_1}{q_2}}\right)^{\frac{1}{q_1}}\\
        &=\left(\sum_{j_1=1}^{\infty}\left(\sum_{j_2=1}^{\infty} \Big\Vert T\left( \sum_{k=1}^2 \sum_{i\in I_k} x_{j_k}^{(i)}\cdot e_i \right) \Big\Vert_{F}^{q_2}\right)^{\frac{q_1}{q_2}}\right)^{\frac{1}{q_1}}.
    \end{align*}
 This proves that $T$ is $(B;X_1,X_2,X_3;Y_1,Y_2,Y_3)$-summing if and only if it is  partially $\cal I$-$(p_1, p_2,p_3;q_1,q_2)$-summing.
\end{example}

\section{Banach multi-ideals}
In this section we prove that the classes of multilinear operators introduced in Definition \ref{def-1} enjoy good properties. Throughout this section, $X_1,\ldots, X_n$, $Y_1,\ldots, Y_n$ are sequence classes, $B \subseteq \mathbb{N}^n$ is a nonvoid block and $E_1, \ldots, E_n,F$ are Banach spaces.

\begin{proposition}\label{prop.2} If $T\in\mathcal{L}_{B;X_1,\ldots,X_n;Y_1,\ldots,Y_n}(E_1,\ldots,E_n;F)$ then
    \begin{align*}
    	\Vert T\Vert \leq \Vert T\Vert_{B;X_1,\ldots,X_n;Y_1,\ldots,Y_n}.
    \end{align*}
    \end{proposition}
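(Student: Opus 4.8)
The plan is to unwind the definition $\|T\|_{B;X_1,\ldots,X_n;Y_1,\ldots,Y_n}=\|\widehat{T}_B\|$, where the right-hand side is the norm of $\widehat{T}_B$ as a continuous $n$-linear map from $X_1(E_1)\times\cdots\times X_n(E_n)$ into $\mathbf{Y}_n(F)$, and then to test this operator on one-point sequences sitting at a point of the block. Since $\|T\|=\sup\{\|T(x_1,\ldots,x_n)\|_F : x_i\in B_{E_i}\}$, it suffices to show that $\|T(x_1,\ldots,x_n)\|_F\le\|\widehat{T}_B\|$ for all $x_i\in B_{E_i}$, $i=1,\ldots,n$. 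As $B$ is nonvoid, I fix once and for all a point $(m_1,\ldots,m_n)\in B$, so that in particular $m_n\in B^{m_1,\ldots,m_{n-1}}$.

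Given $x_1,\ldots,x_n$ with $x_i\in B_{E_i}$, I would feed $\widehat{T}_B$ the eventually null sequences $x_i\cdot e_{m_i}\in c_{00}(E_i)\subset X_i(E_i)$. By $n$-linearity of $T$, the entry $T(x^{(1)}_{j_1},\ldots,x^{(n)}_{j_n})$ of $\widehat{T}_B(x_1\cdot e_{m_1},\ldots,x_n\cdot e_{m_n})$ vanishes unless $j_i=m_i$ for every $i$, in which case it equals $T(x_1,\ldots,x_n)$; since $m_n\in B^{m_1,\ldots,m_{n-1}}$, this surviving entry does occur in the array. Hence $\widehat{T}_B(x_1\cdot e_{m_1},\ldots,x_n\cdot e_{m_n})$ is the element of $\mathbf{Y}_n(F)$ all of whose coordinates are $0$ except the one indexed by $(m_1,\ldots,m_n)$, which equals $T(x_1,\ldots,x_n)$.

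Next I would read off the norm of this element. Applying the contractive inclusions $Y_k(\cdot)\stackrel{1}{\hookrightarrow}\ell_\infty(\cdot)$ successively, from the outermost sum inward, bounds the $\mathbf{Y}_n(F)=Y_1(\cdots Y_n(F)\cdots)$-norm below by the $\ell_\infty(\cdots\ell_\infty(F)\cdots)$-norm, which is just the supremum of the $F$-norms of the coordinates; evaluating this supremum at $(m_1,\ldots,m_n)$ gives
\[
\|T(x_1,\ldots,x_n)\|_F\ \le\ \big\|\widehat{T}_B(x_1\cdot e_{m_1},\ldots,x_n\cdot e_{m_n})\big\|_{\mathbf{Y}_n(F)}.
\]
To finish I need the input sequences to lie in the closed unit balls of the $X_i(E_i)$. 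Here I would invoke the elementary norm identity $\|x\cdot e_j\|_{X(E)}=\|x\|_E$, valid for every sequence class: the lower bound $\|x\cdot e_j\|_{X(E)}\ge\|x\cdot e_j\|_{\ell_\infty(E)}=\|x\|_E$ is immediate from $X(E)\stackrel{1}{\hookrightarrow}\ell_\infty(E)$, while the reverse inequality follows by pushing the scalar sequence $e_j$ (with $\|e_j\|_{X(\mathbb{K})}=1$) through the norm-one operator $\lambda\mapsto\lambda x/\|x\|$ and using linear stability. Consequently $\|x_i\cdot e_{m_i}\|_{X_i(E_i)}=\|x_i\|\le 1$, so the displayed inequality yields $\|T(x_1,\ldots,x_n)\|_F\le\|\widehat{T}_B\|$, and taking the supremum over the unit balls gives $\|T\|\le\|\widehat{T}_B\|=\|T\|_{B;X_1,\ldots,X_n;Y_1,\ldots,Y_n}$.

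The routine parts are the bookkeeping of the iterated $\ell_\infty$ embeddings and the multilinearity that kills all but one coordinate. The only genuine point is the norm identity $\|x\cdot e_j\|_{X(E)}=\|x\|_E$; its nontrivial half (the upper bound) is exactly where one uses that the classes behave functorially (linear stability) rather than merely the two defining axioms, so I would either cite it as a known property of sequence classes from \cite{botelho2017transformation} or record it as a short preliminary lemma.
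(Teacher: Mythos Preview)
Your proof is correct and follows essentially the same route as the paper's: fix a point $(m_1,\ldots,m_n)\in B$, feed $\widehat{T}_B$ the one-term sequences $x_i\cdot e_{m_i}$, and read off $\|T(x_1,\ldots,x_n)\|_F$. The paper actually computes the exact equality $\|\widehat{T}_B(x_1\cdot e_{m_1},\ldots,x_n\cdot e_{m_n})\|_{\mathbf{Y}_n(F)}=\|T(x_1,\ldots,x_n)\|_F$ by peeling off the iterated $e_{m_k}$'s using $\|y\cdot e_j\|_{Y(G)}=\|y\|_G$, whereas you obtain the (sufficient) lower bound via the embeddings $Y_k\stackrel{1}{\hookrightarrow}\ell_\infty$; this is a cosmetic difference. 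On the input side both arguments use $\|x\cdot e_j\|_{X(E)}=\|x\|_E$; the paper invokes it silently (it is stated elsewhere in the paper as a general fact about sequence classes), and you are right to flag that its upper-bound half is where linear stability, or a citation to \cite{botelho2017transformation}, enters.
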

\begin{proof}Let $(x^{(1)},\ldots,x^{(n)})\in E_1\times\cdots\times E_n$ and  $(j_1',\ldots,j_n')\in B$ be given. Consider the sequences $(x_j^{(k)})_{j=1}^{\infty}:= x^{(k)}\cdot e_{j_k'}\in X_k(E_k)$, $k=1,\ldots, n$. In this fashion,
\begin{align*}
	&\left\| \widehat{T}\left( (x_{j}^{(1)} )_{j=1}^{\infty},\ldots,(x_{j}^{(n)} )_{j=1}^{\infty} \right)\right\|_{\mathbf{Y}_n(F)}= \\
	&= \left\| \left(\ldots\left(\left( T\left( x_{j_1}^{(1)}, \ldots , x_{j_n}^{(n)}\right)\right)_{j_n\in B^{j_1,\ldots,j_{n-1}}}\right)_{j_{n-1}=1}^{\infty}\ldots\right)_{j_1=1}^{\infty}\right\|_{Y_1(\cdots(Y_n(F))\cdots)}\\
        &= \left\| \left(\ldots\left(\left( T\left( x_{j_1'}^{(1)}, \ldots , x_{j_n}^{(n)}\right)\right)_{j_n\in B^{j_1',\ldots,j_{n-1}}}\right)_{j_{n-1}=1}^{\infty}\ldots\right)_{j_2=1}^{\infty}\cdot e_{j_1'} \right\|_{Y_1(\cdots(Y_n(F))\cdots)}  \\
    &= \left\| T\left( x_{j_1'}^{(1)}, \ldots, x_{j_n'}^{(n)}\right)\cdot e_{j_{n}'}\cdot\ldots\cdot e_{j_2'}\cdot e_{j_1'} \right\|_{Y_1(\cdots(Y_n(F))\cdots)} \\
    &= \left\| T\left( x_{j_1'}^{(1)},\ldots,x_{j_n'}^{(n)}\right) \right\|_{F}=  \Vert T\left( x^{(1)},\ldots,x^{(n)}\right) \Vert_{F}.
\end{align*}
The continuity of $\widehat{T}_B$ gives
\begin{align*}
	\Vert T\left( x^{(1)},\ldots,x^{(n)}\right) \Vert_{F} &= \left\| \widehat{T}\left( (x_{j}^{(1)} )_{j=1}^{\infty},\ldots,(x_{j}^{(n)} )_{j=1}^{\infty} \right)\right\|_{\mathbf{Y}_n(F)} \\
    &\leq \Vert \widehat{T}_B \Vert \cdot \prod_{k=1}^{(n)}\Vert (x_{j}^{k})_{j=1}^{\infty} \Vert_{X_k(E_k)} \\
    &= \Vert T\Vert_{B;X_1,\ldots,X_n;Y_1,\ldots,Y_n} \cdot \prod_{k=1}^{(n)}\Vert x^k \Vert_{E_k},
\end{align*}
and the result follows.
\end{proof}

We omit the proof that, regardless of the sequence classes $X_1$, $\ldots$, $X_n$, $Y_1$, $\ldots$, $Y_n$, $\mathcal{L}_{B;X_1,\ldots,X_n;Y_1,\ldots,Y_n}(E_1,\ldots,E_n;F)$ is a linear subspace of $\mathcal{L}(E_1,\ldots,E_n;F)$ on which $\Vert\cdot\Vert_{B;X_1,\ldots,X_n;Y_1,\ldots,Y_n}$ is a norm.

\begin{proposition} $(\mathcal{L}_{B;X_1,\ldots,X_n;Y_1,\ldots,Y_n}(E_1,\ldots,E_n;F), \Vert\cdot\Vert_{B;X_1,\ldots,X_n;Y_1,\ldots,Y_n})$ is a Banach space.
\end{proposition}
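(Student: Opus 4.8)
The plan is to establish completeness directly: take a Cauchy sequence $(T_m)_{m=1}^{\infty}$ in $\mathcal{L}_{B;X_1,\ldots,X_n;Y_1,\ldots,Y_n}(E_1,\ldots,E_n;F)$ and produce a limit lying inside the class. First I would observe that, by Proposition \ref{prop.2}, $\|T_r - T_s\| \leq \|T_r - T_s\|_{B;X_1,\ldots,X_n;Y_1,\ldots,Y_n}$ for all $r,s \in \mathbb{N}$, so $(T_m)$ is also Cauchy in the sup norm of $\mathcal{L}(E_1,\ldots,E_n;F)$. Since that space is complete, there is $T \in \mathcal{L}(E_1,\ldots,E_n;F)$ with $T_m \to T$ in the sup norm; in particular $T_m(x^{(1)},\ldots,x^{(n)}) \to T(x^{(1)},\ldots,x^{(n)})$ in $F$ for every $(x^{(1)},\ldots,x^{(n)}) \in E_1 \times \cdots \times E_n$.

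Next I would exploit the fact that $T \mapsto \widehat{T}_B$ is, by the very definition of the norm, a linear isometry from the class into the space $\mathcal{L}(X_1(E_1),\ldots,X_n(E_n);\mathbf{Y}_n(F))$ of continuous $n$-linear operators. The latter is a Banach space because $\mathbf{Y}_n(F) = Y_1(\cdots(Y_n(F))\cdots)$ is a Banach space. Hence $(\widehat{(T_m)}_B)_{m=1}^{\infty}$ is Cauchy there and converges to some continuous $n$-linear operator $S$. The proof then reduces to identifying $S$ with $\widehat{T}_B$, which simultaneously shows that $\widehat{T}_B$ is a well-defined continuous operator (so that $T$ belongs to the class) and that $\|T_m - T\|_{B;X_1,\ldots,X_n;Y_1,\ldots,Y_n} = \|\widehat{(T_m)}_B - S\| \to 0$.

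The identification is carried out coordinate-wise, mirroring the projection argument already used in the proof of Proposition \ref{prop-1}. Fixing sequences $(x^{(k)}_j)_{j=1}^{\infty} \in X_k(E_k)$ for $k=1,\ldots,n$ and a multi-index $(i_1,\ldots,i_n) \in \mathbb{N}^n$, I would compose the norm-one coordinate projections $\mathbf{Y}_n(F) \longrightarrow F$ — continuous because each $Y_k(\cdot) \stackrel{1}{\hookrightarrow} \ell_\infty(\cdot)$ — with the convergence $\widehat{(T_m)}_B \to S$ in $\mathbf{Y}_n(F)$. This yields that the $(i_1,\ldots,i_n)$-coordinate of $S\big((x^{(1)}_j)_{j=1}^{\infty},\ldots,(x^{(n)}_j)_{j=1}^{\infty}\big)$ is the limit in $F$ of the corresponding coordinate of $\widehat{(T_m)}_B\big((x^{(1)}_j)_{j=1}^{\infty},\ldots,(x^{(n)}_j)_{j=1}^{\infty}\big)$. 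That coordinate equals $T_m(x^{(1)}_{i_1},\ldots,x^{(n)}_{i_n})$ when $(i_1,\ldots,i_n) \in B$ and equals $0$ otherwise; by the pointwise convergence $T_m \to T$ established in the first step, it tends to $T(x^{(1)}_{i_1},\ldots,x^{(n)}_{i_n})$, respectively to $0$. Therefore $S\big((x^{(1)}_j)_{j=1}^{\infty},\ldots,(x^{(n)}_j)_{j=1}^{\infty}\big)$ agrees coordinate-wise with $\widehat{T}_B\big((x^{(1)}_j)_{j=1}^{\infty},\ldots,(x^{(n)}_j)_{j=1}^{\infty}\big)$, so the latter is a well-defined element of $\mathbf{Y}_n(F)$ and $\widehat{T}_B = S$ is continuous.

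The only delicate point is this last identification of the abstract limit $S$ with the concrete induced operator $\widehat{T}_B$; everything else is a routine transfer of completeness through the isometry $T \mapsto \widehat{T}_B$, using Proposition \ref{prop.2} to obtain sup-norm convergence of $(T_m)$. Since the requisite projection argument is exactly the one appearing in Proposition \ref{prop-1}, I expect no genuinely new difficulty to arise, only heavier indexing in the general $n$-linear case.
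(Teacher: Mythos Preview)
Your proposal is correct and follows essentially the same approach as the paper's own proof: both exploit that $T\mapsto\widehat{T}_B$ is a linear isometry into the Banach space $\mathcal{L}(X_1(E_1),\ldots,X_n(E_n);\mathbf{Y}_n(F))$, use Proposition~\ref{prop.2} to obtain sup-norm convergence of the Cauchy sequence to some $T$, and then identify the abstract limit $S$ with $\widehat{T}_B$ via the coordinate projections exactly as in the proof of Proposition~\ref{prop-1}. The only cosmetic difference is that the paper phrases the argument as ``the range of the isometry is closed'' whereas you start directly from a Cauchy sequence; the underlying computations coincide.
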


\begin{proof} The case $n=2$ is illustrative. The operator
$$ T \in \mathcal{L}_{B;X_1,X_2;Y_1,Y_2}(E_1,E_2;F) \mapsto \widehat{T}_B \in\mathcal{L}(X_1(E_1),X_2(E_2);Y_1(Y_2(F)))$$
is a linear isometric embedding into a Banach space. So is it enough to show that its range is closed. To do so, let $(T_j)_{j=1}^\infty$ be a sequence in $ \mathcal{L}_{B;X_1,X_2;Y_1,Y_2}(E_1,E_2;F)$ such that $\widehat{T_j}_B \longrightarrow S$ in $\mathcal{L}(X_1(E_1),X_2(E_2);Y_1(Y_2(F)))$. By Proposition \ref{prop.2},
$$\|T_j - T_k\| \leq \|T_j - T_k\|_{B;X_1,\ldots,X_n;Y_1,\ldots,Y_n}= \|\widehat{T_j}_B - \widehat{T_k}_B\|, $$
so there is $T \in \mathcal{L}(E_1,E_2;F)$ such that
\begin{equation}\label{conv}T_j \longrightarrow T {\rm ~in~}\mathcal{L}(E_1,E_2;F).
\end{equation}
Given $(x_{j})_{j=1}^{\infty}\in X_1(E_1)$ and $(y_{j})_{j=1}^{\infty}\in X_2(E_2)$, we wish to prove that
\begin{align*}
	S\left((x_{j})_{j=1}^{\infty},(x_{j})_{j=1}^{\infty}\right) = \left( \left( T\left( x_{j_1},y_{j_2}\right)\right)_{j_2\in B^{j_1}}\right)_{j_1=1}^{\infty},
\end{align*}
that is, for any pair of indices $j_1,j_2\in\mathbb{N}$,
\begin{align*}
	(P_{2,j_2}\circ P_{1,j_1})\left( S\left((x_{j})_{j=1}^{\infty},(x_{j})_{j=1}^{\infty}\right) \right) &= (P_{2,j_2}\circ P_{1,j_1})\left( \left( \left( T\left( x_{j_1},y_{j_2}\right)\right)_{j_2\in B^{j_1}}\right)_{j_1=1}^{\infty} \right)\\
    &=P_{2,j_2}\left( \left( T\left( x_{j_1},y_{j_2}\right)\right)_{j_2\in B^{j_1}} \right),
\end{align*}
where $P_{2,j_2}$ and $P_{1,j_1}$ are the projections of the proof of Proposition \ref{prop-1}. If $j_1$ is such that $B^{j_1}=\emptyset$ we have, by definition, that
\begin{align*}
    P_{2,j_2}\left( \left( T\left( x_{j_1},y_{j_2}\right)\right)_{j_2\in B^{j_1}} \right) = 0 {\rm ~for~every~} j_2\in\mathbb{N}. \end{align*}
From $P_{2,j_2}\circ P_{1,j_1}\left( \widehat{T}_{B,j}\left((x_{k})_{k=1}^{\infty},(y_{k})_{k=1}^{\infty}\right) \right) = 0$ for every $j_2\in B^{j_1}$ and every $j\in\mathbb{N}$, the convergence $\widehat{T_j}_B \longrightarrow S$ gives
\begin{align*}
	P_{2,j_2}\circ P_{1,j_1}\left( \widehat{T}_{B,j}\left((x_{k})_{k=1}^{\infty},(y_{k})_{k=1}^{\infty}\right) \right) \stackrel{j}{\longrightarrow} P_{2,j_2}\circ P_{1,j_1}\left( S\left((x_{k})_{k=1}^{\infty},(y_{k})_{k=1}^{\infty}\right) \right)
\end{align*}
for every $ j_2\in \mathbb{N}$. Then $P_{2,j_2}\circ P_{1,j_1}\left( S\left((x_{j})_{j=1}^{\infty},(x_{j})_{j=1}^{\infty}\right) \right) = 0$ for every $j_2$ and for every $j_1$ such that $B^{j_1}=\emptyset$, that is, for those $j_1$ and $j_2$,
$$P_{2,j_2}\circ P_{1,j_1}\left( S\left((x_{j})_{j=1}^{\infty},(x_{j})_{j=1}^{\infty}\right) \right) = P_{2,j_2}\circ P_{1,j_1}\left( \left( \left( T\left( x_{j_1},y_{j_2}\right)\right)_{j_2\in B^{j_1}}\right)_{j_1=1}^{\infty} \right).$$
On the other hand, for $j_1$ such that $B^{j_1}\neq \emptyset$, from \eqref{conv} it follows that
\begin{align*}
	T_j(x_{j_1},y_{j_2})  \stackrel{j}{\longrightarrow} T(x_{j_1},y_{j_2}) {\rm ~for~every~} j_2\in\mathbb{N}.
\end{align*}
Calling on the convergence $\widehat{T_j}_B \longrightarrow S$  once again, we get  $$T_j(x_{j_1},y_{j_2}) = P_{2,j_2}\circ P_{1,j_1}\left( \widehat{T}_{B,j}\left((x_{k})_{k=1}^{\infty},(y_{k})_{k=1}^{\infty}\right) \right) \stackrel{j}{\longrightarrow} P_{2,j_2}\circ P_{1,j_1}\left( S\left((x_{k})_{k=1}^{\infty},(y_{k})_{k=1}^{\infty}\right) \right)$$ for every $j_2\in B^{j_1}$.
So,
\begin{align*}
	T(x_{j_1},y_{j_2}) = P_{2,j_2}\circ P_{1,j_1}\left( S\left((x_{j})_{j=1}^{\infty},(x_{j})_{j=1}^{\infty}\right) \right) {\rm ~for~every~} j_2\in B^{j_1},
\end{align*}
from which it follows that
\begin{align*}
	P_{2,j_2}\circ P_{1,j_1}\left( \left( \left( T\left( x_{j_1},y_{j_2}\right)\right)_{j_2\in B^{j_1}}\right)_{j_1=1}^{\infty} \right) &= T(x_{j_1},y_{j_2})\\
	&= P_{2,j_2}\circ P_{1,j_1}\left( S\left((x_{j})_{j=1}^{\infty},(y_{j})_{j=1}^{\infty}\right) \right),
\end{align*}
proving that $\left( \left( T\left( x_{j_1},y_{j_2}\right)\right)_{j_2\in B^{j_1}}\right)_{j_1=1}^{\infty}\in Y_1(Y_2(F))$, that is, $T\in \mathcal{L}_{B;X_1,X_2;Y_1,Y_2}(E_1,E_2;F)$. The reasoning above also shows that  $\widehat{T}_B=S$, completing the proof.
\end{proof}

\begin{proposition} Let
	$X_1,\ldots, X_n$, $Y_1,\ldots, Y_n$ be linearly stable sequence classes. If $T\in\mathcal{L}_{B;X_1,\ldots,X_n;Y_1,\ldots,Y_n}(E_1,\ldots,E_n;F)$, $u_k\in\mathcal{L}(G_k;E_k)$, $k=1,\ldots,n$, and $v\in\mathcal{L}(F;H)$, then 
    \begin{align*}
        v\circ T\circ(u_1,\ldots,u_n)\in\mathcal{L}_{B;X_1,\ldots,X_n;Y_1,\ldots,Y_n}(G_1,\ldots,G_n;H){\rm ~and}
    \end{align*}
    \begin{align*}
    	\Vert v \circ T\circ (u_1,\ldots,u_n)\Vert_{B;X_1,\ldots,X_n;Y_1,\ldots,Y_n}\leq\Vert v\Vert\cdot \Vert T\Vert_{B;X_1,\ldots,X_n;Y_1,\ldots,Y_n}\cdot \prod_{k=1}^{n}\Vert u_k\Vert.
    \end{align*}
\label{prop.3}
\end{proposition}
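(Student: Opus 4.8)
The plan is to show that the composition operator inherits the summing property by relating its associated map $\widehat{(v\circ T\circ(u_1,\ldots,u_n))}_B$ to $\widehat{T}_B$ through the linear stability of the sequence classes. First I would fix sequences $(x_j^{(k)})_{j=1}^{\infty}\in X_k(G_k)$ for $k=1,\ldots,n$. Because each $u_k\in\mathcal{L}(G_k;E_k)$ and each $X_k$ is linearly stable, the sequences $(u_k(x_j^{(k)}))_{j=1}^{\infty}$ belong to $X_k(E_k)$ with $\|(u_k(x_j^{(k)}))_{j=1}^{\infty}\|_{X_k(E_k)}\leq \|u_k\|\cdot\|(x_j^{(k)})_{j=1}^{\infty}\|_{X_k(G_k)}$. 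Since $T$ is $(B;X_1,\ldots,X_n;Y_1,\ldots,Y_n)$-summing, plugging these transformed sequences into condition (i) of Proposition \ref{prop-1} guarantees that the iterated sequence built from $T(u_1(x_{j_1}^{(1)}),\ldots,u_n(x_{j_n}^{(n)}))$ lies in $\mathbf{Y}_n(F)$.

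The key observation is the factorization identity
\begin{align*}
v\circ T\circ(u_1,\ldots,u_n)(x_{j_1}^{(1)},\ldots,x_{j_n}^{(n)}) = v\bigl(T(u_1(x_{j_1}^{(1)}),\ldots,u_n(x_{j_n}^{(n)}))\bigr),
\end{align*}
so the iterated sequence associated with the composition is obtained from the one associated with $T$ (evaluated at the $u_k$-images) by applying $v$ coordinatewise. Now I would use linear stability of the $Y_k$'s, applied $n$ times through the nested structure $\mathbf{Y}_n=Y_1(\cdots(Y_n(\cdot))\cdots)$: the induced linear map on $\mathbf{Y}_n$ sending an $F$-valued iterated sequence to its $v$-image in $\mathbf{Y}_n(H)$ is bounded with norm at most $\|v\|$. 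This yields that the iterated sequence for $v\circ T\circ(u_1,\ldots,u_n)$ belongs to $\mathbf{Y}_n(H)$, establishing via condition (i) that the composition is $(B;X_1,\ldots,X_n;Y_1,\ldots,Y_n)$-summing.

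For the norm estimate, I would chain the three bounds. Evaluating $\|\widehat{(v\circ T\circ(u_1,\ldots,u_n))}_B((x_j^{(1)})_j,\ldots,(x_j^{(n)})_j)\|_{\mathbf{Y}_n(H)}$, linear stability in the $Y_k$'s gives a factor $\|v\|$ and reduces to $\|\widehat{T}_B((u_1(x_j^{(1)}))_j,\ldots,(u_n(x_j^{(n)}))_j)\|_{\mathbf{Y}_n(F)}$; continuity of $\widehat{T}_B$ bounds this by $\|\widehat{T}_B\|\cdot\prod_{k=1}^n\|(u_k(x_j^{(k)}))_j\|_{X_k(E_k)}$; and linear stability in the $X_k$'s bounds each factor by $\|u_k\|\cdot\|(x_j^{(k)})_j\|_{X_k(G_k)}$. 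Taking the supremum over sequences in the unit balls yields $\|\widehat{(v\circ T\circ(u_1,\ldots,u_n))}_B\|\leq\|v\|\cdot\|\widehat{T}_B\|\cdot\prod_{k=1}^n\|u_k\|$, which is exactly the claimed inequality since $\|\cdot\|_{B;\ldots}=\|\widehat{\cdot}_B\|$.

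The only delicate point is making the iterated linear stability argument for $\mathbf{Y}_n$ fully rigorous: one must verify that applying a bounded linear $v$ coordinatewise extends, level by level through the nested sequence classes, to a bounded operator on $\mathbf{Y}_n$ of norm at most $\|v\|$. I expect this to be the main obstacle, though it follows by a straightforward induction on $n$: linear stability of $Y_n$ handles the innermost level, and then linear stability of $Y_{n-1},\ldots,Y_1$ propagates the bound outward, each step contributing a factor no larger than the operator norm of the previous step and hence never exceeding $\|v\|$. The bilinear case $n=2$ already displays the full mechanism and can be written out explicitly, with the general case following by the same induction.
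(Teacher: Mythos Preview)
Your proposal is correct and follows essentially the same approach as the paper: use linear stability of the $X_k$'s to push sequences through the $u_k$'s, iterate linear stability of the $Y_k$'s to lift $v$ to a bounded operator $\widehat{v}\colon\mathbf{Y}_n(F)\to\mathbf{Y}_n(H)$, and then read off the norm inequality from the factorization $\widehat{(v\circ T\circ(u_1,\ldots,u_n))}_B=\widehat{v}\circ\widehat{T}_B\circ(\widehat{u_1},\ldots,\widehat{u_n})$. The paper writes the $v$-part explicitly only for $n=2$ and claims the equalities $\|\widehat{v}\|=\|v\|$, $\|\widehat{u_k}\|=\|u_k\|$, whereas you argue by induction and use only the inequalities $\leq$, but these are cosmetic differences.
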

\begin{proof} Let $(x_{j}^k)_{j=1}^{\infty}\in X_k(G_k)$, $k=1,\ldots,n$. The linear stability of the sequence classes $X_1, \ldots, X_k$ gives $(u_k(x_{j}))_{j=1}^{\infty}\in X_k(E_k)$. As $T\in\mathcal{L}_{B;X_1,\ldots,X_n;Y_1,\ldots,Y_n}(E_1,\ldots,E_n;F)$, we have
    \begin{align*}
    	\left(\ldots\left( \left( T(u_1,\ldots, u_n)(x_{j_1}^{(1)},\ldots,x_{j_n}^{(n)}) \right)_{j_n\in B^{j_1,\ldots,j_{n-1}}}\right)_{j_{n-1}=1}^{\infty}\ldots\right)_{j_1=1}^{\infty} &= \\ =\left(\ldots\left( \left( T(u_1(x_{j_1}),\ldots, u_n(x_{j_n})) \right)_{j_n\in B^{j_1,\ldots,j_{n-1}}}\right)_{j_{n-1}=1}^{\infty}\ldots\right)_{j_1=1}^{\infty} \in \mathbf{Y}_n(F),
    \end{align*}
proving that $T\circ(u_1,\ldots, u_n)\in\mathcal{L}_{B;X_1,\ldots,X_n;Y_1,\ldots,Y_n}(G_1,\ldots,G_n;F)$.

We prove the other composition in the bilinear case $n=2$ and point out that the general case is analogous. Since $v\colon F\longrightarrow H$ is continuous and $Y_2$ is linearly stable, the operator $\tilde{v} \colon Y_2(F) \longrightarrow Y_2(H)$ induced by $v$ is well defined, linear and continuous. But $Y_1$ is linearly stable as well, so the operator $\widehat{\tilde{v}}\colon Y_1(Y_2(F))\longrightarrow Y_1(Y_2(H))$ induced by $\tilde{v}$ is well defined, linear and continuous. For simplicity, we denote $\widehat{v}:= \widehat{\tilde{v}}$. Given $((z_{j_1,j_2})_{j_2=1}^{\infty})_{j_1=1}^{\infty}\in Y_1(Y_2(F))$,
    \begin{align*}
    	\left(  (v(z_{j_1,j_2}))_{j_2=1}^{\infty} \right)_{j_1=1}^{\infty} &= \left( \tilde{v} ((z_{j_1,j_2})_{j_2=1}^{\infty} \right)_{j_1=1}^{\infty} \\
    	&= \widehat{v}\left(((z_{j_1,j_2})_{j_2=1}^{\infty})_{j_1=1}^{\infty}\right) \in Y_1(Y_2(H)).
    \end{align*}
   Given $(x_{j_1})_{j_1=1}\in X_1(E_1)$ and $(y_{j_2})_{j_2=1}\in X_2(E_2)$, as $T$ is $(B;X_1,X_2;Y_1,Y_2)$-summing, we have $\left( \left( T(x_{j_1},y_{j_2}) \right)_{j_2\in B^{j_1}}\right)_{j_1=1}^{\infty}  \in Y_1(Y_2(F))$, from which it follows that
    \begin{align*}
    	\left( \left( v\circ T(x_{j_1},y_{j_2}) \right)_{j_2\in B^{j_1}}\right)_{j_1=1}^{\infty} = \widehat{v}\left( \left( \left( T(x_{j_1},y_{j_2}) \right)_{j_2\in B^{j_1}}\right)_{j_1=1}^{\infty}  \right) \in Y_1(Y_2(H)),
    \end{align*}
  establishing that $v\circ T\in \mathcal{L}_{B;X_1,X_2;Y_1,Y_2}(E_1,E_2;H)$.

To finish the proof, denote by $\widehat{u_k}\colon X_k(G_k) \longrightarrow X_k(E_k)$ the operator induced by $u_k$, by $$(v\circ T\circ (u_1,\ldots,u_n))^{\wedge} \colon X_1(G_1) \times \cdots \times X_n(G_n) \longrightarrow \mathbf{Y}_n(H)$$ the operator induced by $v\circ T \circ(u_1,\ldots,u_n)$, and by $\widehat{v}\colon \mathbf{Y}_n(F)\longrightarrow \mathbf{Y}_n(H)$ the operator induced by $v$. For $(x_{j}^k)_{j=1}^{\infty}\in X_k(G_k)$, $k=1,\ldots,n$,
\begin{align*}
	(v\circ T\circ (u_1,\ldots,u_n))^{\wedge}\left((x_j^{(1)})_{j=1}^{\infty},\ldots, (x_j^{(n)})_{j=1}^{\infty} \right) = \\  =\widehat{v}\circ \widehat{T}\circ(\widehat{u_1},\ldots , \widehat{u_n})\left((x_j^{(1)})_{j=1}^{\infty},\ldots, (x_j^{(n)})_{j=1}^{\infty} \right),
\end{align*}
therefore
\begin{align*}
	\Vert v\circ T\circ(u_1,\ldots,u_n))&\Vert_{B;X_1,\ldots, X_n;Y_1,\ldots,Y_n} =\Vert (v\circ T\circ(u_1,\cdots,u_n))^{\wedge}\Vert= \Vert \widehat{v}\circ \widehat{T}\circ(\widehat{u_1},\ldots , \widehat{u_n})\Vert\\
    &\leq \Vert\widehat{v}\Vert \cdot\Vert \widehat{T}\Vert\cdot \prod_{k=1}^{n}\Vert\widehat{ u_k}\Vert = \Vert v\Vert \cdot\Vert T\Vert_{B;X_1,\ldots, X_n;Y_1,\ldots,Y_n} \cdot \prod_{k=1}^{n}\Vert u_k\Vert,
\end{align*}
becuase $\Vert\widehat{v}\Vert=\Vert v\Vert$ and $\Vert\widehat{u}_k\Vert=\Vert u_k\Vert$, $k=1,\ldots,n$, due to the linear stability of the underlying sequence classes.
\end{proof}

\begin{definition}\rm The $2n$-tuple $(X_1,\ldots,X_n,Y_1,\ldots,Y_n)$ of sequence classes is {\it $B$-compatible} if, for all scalar sequences $(\lambda^k_j)_{j=1}^{\infty}\in X_k(\mathbb{K})$, $k=1,\ldots,n$, it holds
	\begin{align*}
	    (\cdots((\lambda^{(1)}_{j_1}\cdots\lambda^{(n)}_{j_n})_{j_n\in B^{j_1,\ldots,j_{n-1}}})_{j_{n-1}=1}^{\infty}\cdots)_{j_1=1}^{\infty}\in\mathbf{Y}_n(\mathbb{K})
	\end{align*}
	and
\begin{align*}
	\left\| (\cdots((\lambda^{(1)}_{j_1}\cdots\lambda^{(n)}_{j_n})_{j_n\in B^{j_1,\ldots,j_{n-1}}})_{j_{n-1}=1}^{\infty}\cdots)_{j_1=1}^{\infty}\right\|_{\mathbf{Y}_n(\mathbb{K})} \leq\prod_{k=1}^{n}\Vert(\lambda^k_j)_{j=1}^{\infty}\Vert_{X_k(\mathbb{K})}.
\end{align*}
\end{definition}

\begin{examples}\rm (a) For $1\leq p_1\leq q_1<\infty$ and $1\leq p_2\leq q_2<\infty$, let us see that $(\ell_{p_1}(\cdot),\ell_{p_2}(\cdot),\ell_{q_1}(\cdot),\ell_{q_2}(\cdot))$ is $B$-compatible for any block $B \subseteq\mathbb{N}^{2}$. Let $(\lambda^{(1)}_j)_{j=1}^{\infty}\in \ell_{p_1}$ and $(\lambda^{(2)}_j)_{j=1}^{\infty}\in \ell_{p_2}$. For every $j_1\in\mathbb{N}$ the sequence $(\lambda^{(1)}_{j_1}\lambda^{(2)}_{j_2})_{j_2\in B^{j_1}}\in \ell_{q_2}$ because
\begin{align*}
	\sum_{j_2\in B^{j_1}} \vert \lambda^{(1)}_{j_1}\lambda^{(2)}_{j_2} \vert^{q_2} \leq  \sum_{j_2=1}^{\infty} \vert \lambda^{(1)}_{j_1}\lambda^{(2)}_{j_2} \vert^{q_2} <\infty,
\end{align*}
and, furthermore, $\left\| (\lambda^{(1)}_{j_1}\lambda^{(2)}_{j_2})_{j_2\in B^{j_1}}\right\|_{q_2}\leq \left\| (\lambda^{(1)}_{j_1}\lambda^{(2)}_{j_2})_{j_2=1}^{\infty}\right\|_{q_2}$. Hence
    \begin{align*}
    	\sum_{j_1=1}^{\infty}\left\| (\lambda^{(1)}_{j_1}\lambda^{(2)}_{j_2})_{j_2\in B^{j_1}}\right\|_{{q_2}}^{q_1} &= \sum_{j_1=1}^{\infty}\vert\lambda^{(1)}_{j_1}\vert^{q_1}\cdot\left\| (\lambda^{(2)}_{j_2})_{j_2\in B^{j_1}}\right\|_{{q_2}}^{q_1}= \sum_{j_1=1}^{\infty}\vert\lambda^{(1)}_{j_1}\vert^{q_1}\cdot\left( \sum_{j_2\in B^{j_1}}\vert \lambda^{(2)}_{j_2}\vert^{q_2}\right)^{\frac{q_1}{q_2}}\\
        &\leq \sum_{j_1=1}^{\infty}\vert\lambda^{(1)}_{j_1}\vert^{q_1}\cdot\left( \sum_{j_2=1}^{\infty}\vert \lambda^{(2)}_{j_2}\vert^{q_2}\right)^{\frac{q_1}{q_2}}<\infty,
    \end{align*}
that is, $\left((\lambda^{(1)}_{j_1}\lambda^{(2)}_{j_2})_{j_2\in B^{j_1}}\right)_{j_1=1}^{\infty}\in \ell_{q_1}(\ell_{q_2})$, and
    \begin{align*}
    	\left\| ((\lambda^{(1)}_{j_1}\lambda^{(2)}_{j_2})_{j_2\in B^{j_1}})_{j_1=1}^{\infty} \right\|_{\ell_{q_1}(\ell_{q_2})}&=\left( \sum_{j_1=1}^{\infty}\left\| (\lambda^{(1)}_{j_1}\lambda^{(2)}_{j_2})_{j_2\in B^{j_1}}\right\|_{{q_2}}^{q_1}\right)^{\frac{1}{q_1}}\\
        &\leq \left( \sum_{j_1=1}^{\infty}\vert\lambda^{(1)}_{j_1}\vert^{q_1}\cdot\left( \sum_{j_2=1}^{\infty}\vert \lambda^{(2)}_{j_2}\vert^{q_2}\right)^{\frac{q_1}{q_2}}\right)^{\frac{1}{q_1}}\\
        &=\left( \sum_{j_1=1}^{\infty}\vert\lambda^{(1)}_{j_1}\vert^{q_1}\right)^{\frac{1}{q_1}}\cdot\left( \sum_{j_2=1}^{\infty}\vert \lambda^{(2)}_{j_2}\vert^{q_2}\right)^{\frac{1}{q_2}}\\
        &\leq \Vert(\lambda^{(1)}_j)_{j=1}^{\infty}\Vert_{p_1}\cdot\Vert( \lambda^{(2)}_j )_{j=1}^{\infty} \Vert_{p_2}.
    \end{align*}

For $1 \leq p_j < q_j < \infty$, $j = 1, \ldots, n$, and sequences classes $X_1, \ldots, X_n, Y_1, \ldots, Y_n$ such that $X_j(\mathbb{K}) = \ell_{p_j}$ and $Y_j(\mathbb{K}) = \ell_{q_j}$, the $2n$-tuple $(X_1, \ldots, X_n, Y_1, \ldots, Y_n)$ is $B$-compatible for any block $B \subseteq \mathbb{N}^n$.\\
(b) A similar reasoning shows that, for $1\leq p_1, p_2,q < \infty$ with $p_1\leq q$,  the 4-tuple $(X_1, X_2, Y_1, \ell_\infty)$ is $B$-compatible, for any block $B\subseteq\mathbb{N}^{2}$, whenever $X_1 = \ell_{p_1}(\cdot)$ or $\ell_{p_1}^w(\cdot)$, $X_2 = \ell_{p_2}(\cdot)$ or $\ell_{p_2}^w(\cdot)$ and $Y_1 = \ell_{q}(\cdot)$ or $\ell_{q}^w(\cdot)$. 
\end{examples}

\begin{proposition} Let $X_1$, $\ldots$, $X_n$, $Y_1$, $\ldots$, $Y_n$ be linearly stable sequence classes such that $(X_1,\ldots,X_n,Y_1,\ldots,Y_n)$ is $B$-compatible. Then $\mathcal{L}_{B;X_1,\ldots,X_n;Y_1,\ldots,Y_n}(E_1,\ldots,E_n;F)$ contains the $n$-linear operators of finite type and
\begin{align*}
    	\Vert \varphi_1\otimes\cdots\otimes \varphi_n\otimes b\Vert_{B;X_1,\ldots,X_n;Y_1,\ldots,Y_n} = \Vert b\Vert\cdot\prod_{k=1}^{n}\Vert \varphi_k\Vert
    \end{align*}
    for all $\varphi_k \in E_k'$, $k = 1, \ldots, n$, and $b \in F$.
\end{proposition}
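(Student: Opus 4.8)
The plan is to reduce everything to a single elementary tensor $T=\varphi_1\otimes\cdots\otimes\varphi_n\otimes b$. Since $\mathcal{L}_{B;X_1,\ldots,X_n;Y_1,\ldots,Y_n}(E_1,\ldots,E_n;F)$ is a linear subspace of $\mathcal{L}(E_1,\ldots,E_n;F)$ and every finite type operator is a finite linear combination of such tensors, the containment assertion follows at once from the elementary case. For the norm I will prove the upper bound $\Vert T\Vert_{B;X_1,\ldots,X_n;Y_1,\ldots,Y_n}\leq\Vert b\Vert\prod_{k=1}^n\Vert\varphi_k\Vert$ directly; this simultaneously exhibits $T$ as $(B;X_1,\ldots,X_n;Y_1,\ldots,Y_n)$-summing. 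The reverse inequality then comes for free: the usual operator norm of an elementary tensor is $\Vert T\Vert=\Vert b\Vert\prod_{k=1}^n\Vert\varphi_k\Vert$, and once $T$ is known to lie in the class, Proposition \ref{prop.2} gives $\Vert b\Vert\prod_{k=1}^n\Vert\varphi_k\Vert=\Vert T\Vert\leq\Vert T\Vert_{B;X_1,\ldots,X_n;Y_1,\ldots,Y_n}$, closing the sandwich and forcing equality.

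To estimate $\Vert\widehat{T}_B\Vert$, fix sequences $(x_j^{(k)})_{j=1}^\infty\in X_k(E_k)$ and set $\lambda_j^{(k)}:=\varphi_k(x_j^{(k)})$. Because $T(x_{j_1}^{(1)},\ldots,x_{j_n}^{(n)})=\varphi_1(x_{j_1}^{(1)})\cdots\varphi_n(x_{j_n}^{(n)})\,b$, the iterated array $\widehat{T}_B\big((x_j^{(1)})_j,\ldots,(x_j^{(n)})_j\big)$ is precisely the scalar iterated sequence $(\cdots((\lambda_{j_1}^{(1)}\cdots\lambda_{j_n}^{(n)})_{j_n\in B^{j_1,\ldots,j_{n-1}}})_{j_{n-1}=1}^\infty\cdots)_{j_1=1}^\infty$ with each entry multiplied by the fixed vector $b$. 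Since every $\varphi_k\in E_k'=\mathcal{L}(E_k;\mathbb{K})$ and each $X_k$ is linearly stable, we get $(\lambda_j^{(k)})_{j=1}^\infty\in X_k(\mathbb{K})$ with $\Vert(\lambda_j^{(k)})_j\Vert_{X_k(\mathbb{K})}\leq\Vert\varphi_k\Vert\,\Vert(x_j^{(k)})_j\Vert_{X_k(E_k)}$. The $B$-compatibility of $(X_1,\ldots,X_n,Y_1,\ldots,Y_n)$ then places the scalar iterated sequence in $\mathbf{Y}_n(\mathbb{K})$ with norm at most $\prod_{k=1}^n\Vert(\lambda_j^{(k)})_j\Vert_{X_k(\mathbb{K})}$.

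It remains to pass from the scalar sequence back to the $F$-valued one. The linear map $\iota_b\colon\mathbb{K}\to F$, $\iota_b(\lambda)=\lambda b$, has norm $\Vert b\Vert$, and, by the same induced-operator construction used in the proof of Proposition \ref{prop.3} (iterated over the $n$ layers), a composition of linearly stable sequence classes is again linearly stable, so $\mathbf{Y}_n=Y_1(\cdots Y_n(\cdot)\cdots)$ is linearly stable. Applying the operator $\mathbf{Y}_n(\mathbb{K})\to\mathbf{Y}_n(F)$ induced by $\iota_b$ to the scalar iterated sequence reproduces exactly $\widehat{T}_B\big((x_j^{(1)})_j,\ldots,(x_j^{(n)})_j\big)$; hence this element lies in $\mathbf{Y}_n(F)$, and its norm is bounded by $\Vert b\Vert$ times the $\mathbf{Y}_n(\mathbb{K})$-norm of the scalar sequence. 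Chaining the three estimates yields $\Vert\widehat{T}_B((x_j^{(k)})_k)\Vert_{\mathbf{Y}_n(F)}\leq\Vert b\Vert\prod_{k=1}^n\Vert\varphi_k\Vert\,\Vert(x_j^{(k)})_j\Vert_{X_k(E_k)}$, whence $\Vert T\Vert_{B;X_1,\ldots,X_n;Y_1,\ldots,Y_n}=\Vert\widehat{T}_B\Vert\leq\Vert b\Vert\prod_{k=1}^n\Vert\varphi_k\Vert$, as required.

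The one genuinely non-routine point is the multiplication-by-$b$ step: it needs the preservation of linear stability under iteration of the classes $Y_1,\ldots,Y_n$, so that the norm-$\Vert b\Vert$ functorial behaviour survives through all $n$ layers of $\mathbf{Y}_n$. Everything else is bookkeeping resting on the linear stability of the $X_k$, the definition of $B$-compatibility, and Proposition \ref{prop.2}.
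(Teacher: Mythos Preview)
Your proof is correct and follows essentially the same route as the paper: linear stability of the $X_k$ pushes the input sequences to scalar sequences, $B$-compatibility bounds the iterated scalar array in $\mathbf{Y}_n(\mathbb{K})$, the map $\iota_b(\lambda)=\lambda b$ together with iterated linear stability of the $Y_k$ handles the vector factor, and Proposition~\ref{prop.2} supplies the reverse inequality. The only cosmetic difference is that the paper first isolates the scalar-valued tensor $\varphi_1\otimes\cdots\otimes\varphi_n$ and then invokes the ideal property (Proposition~\ref{prop.3}) to compose with $u(\lambda)=\lambda b$, whereas you inline that composition step directly.
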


\begin{proof} Without loss of generality, let us prove the case $n = 2$. Given $(x_j)_{j=1}^{\infty}\in X_1(E_1)$ and $(y_j)_{j=1}^{\infty} \in X_2(E_2)$, the linear stability of $X_1$ and $X_2$ gives $(\varphi_1(x_j))_{j=1}^{\infty}\in X_1(\mathbb{K})$ and $(\varphi(y_j))_{j=1}^{\infty} \in X_2(\mathbb{K})$, hence the $B$-compatibility of $(X_1,X_2,Y_1,Y_2)$ yields
\begin{align*}
	((\varphi_1\otimes \varphi_2(x_{j_1},y_{j_2}))_{j_2\in B^{j_1}})_{j_1=1}^{\infty}=((\varphi_1(x_{j_1})\varphi(y_{j_2}))_{j_2\in B^{j_1}})_{j_1=1}^{\infty}\in Y_1(Y_2(\mathbb{K})),
\end{align*}
showing that $\varphi_1\otimes \varphi_2\in \mathcal{L}_{B;X_1,X_2;Y_1,Y_2}(E_1,E_2;\mathbb{K})$. Calling on Proposition \ref{prop.2} we get
\begin{align*}
	\Vert \varphi_1\otimes \varphi_2\Vert_{B;X_1,X_2;Y_1,Y_2} \geq\Vert \varphi_1\otimes \varphi_2\Vert = \Vert \varphi_1\Vert\cdot\Vert\varphi_2\Vert.
\end{align*}
Consider now $0 \neq x\in B_{E_1}$ and $0 \neq y\in B_{E_2}$, a pair $(j_1',j_2')\in B$ and the sequences $(x_j)_{j=1}^{\infty} = x\cdot e_{j_1'}$ and $(y_j)_{j=1}^{\infty} = y\cdot e_{j_2'}$. So,
\begin{align*}
	\Vert \widehat{(\varphi_1\otimes \varphi_2)}_B \left((x_j)_{j=1}^{\infty}, (y_j)_{j=1}^{\infty}\right)\Vert_{Y_1(Y_2(F))} &= \Vert((\varphi_1\otimes \varphi_2(x_{j_1},y_{j_2}))_{j_2\in B^{j_1}})_{j_1=1}^{\infty}\Vert_{Y_1(Y_2(F))}\\
    &=\Vert ((\varphi_1(x_{j_1})\varphi(y_{j_2}))_{j_2\in B^{j_1}})_{j_1=1}^{\infty}\Vert_{Y_1(Y_2(F))} \\
    &= \Vert (\varphi_1(x_{j_1'})\varphi(y_{j_2}))_{j_2\in B^{j_1'}})\cdot e_{j_1'} \Vert_{Y_1(Y_2(F))}\\
    &= \Vert (\varphi_1(x)\varphi(y_{j_2}))_{j_2\in B^{j_1'}} \Vert_{Y_2(F)}\\
    &=\vert \varphi_1(x)\vert\cdot \Vert (\varphi(y_{j_2}))_{j_2\in B^{j_1'}} \Vert_{Y_2(F)}\\
    &= \vert \varphi_1(x)\vert\cdot \Vert (\varphi(y_{j_2'}))\cdot e_{j_2'} \Vert_{Y_2(F)} \\
    &= \vert \varphi_1(x)\vert\cdot \vert \varphi(y)\vert\leq \Vert \varphi_1\Vert\cdot\Vert\varphi_2\Vert,
\end{align*}
from which it follows that $\varphi_1\otimes \varphi_2\in \mathcal{L}_{B;X_1,X_2;Y_1,Y_2}(E_1,E_2;\mathbb{K})$ and $\Vert \varphi_1\otimes \varphi_2\Vert_{B;X_1,X_2;Y_1,Y_2} = \Vert \widehat{(\varphi_1\otimes \varphi_2)}_B\Vert\leq \Vert\varphi_1\Vert\cdot\Vert\varphi_2\Vert$.

To finish the proof, consider the operator $u\colon \mathbb{K}\longrightarrow F$ given by $u(\lambda) = \lambda b$. It is clear that $\varphi_1\otimes \varphi_2\otimes b = u\circ(\varphi_1\otimes \varphi_2)$, so Proposition \ref{prop.3}  gives $\varphi_1\otimes \varphi_2\otimes b\in \mathcal{L}_{B;X_1,X_2;Y_1,Y_2}(E_1,E_2;F)$ and
\begin{align*}
	\Vert \varphi_1\otimes \varphi_2\otimes b\Vert_{B;X_1,X_2;Y_1,Y_2} &= \Vert u\circ\varphi_1\otimes \varphi_2\Vert_{B;X_1,X_2;Y_1,Y_2} \leq \Vert u\Vert\cdot \Vert \varphi_1\otimes \varphi_2\Vert_{B;X_1,X_2;Y_1,Y_2} \\&= \Vert b\Vert\cdot \Vert \leq \varphi_1\Vert\cdot\Vert\varphi_2\Vert.
\end{align*}
The reverse inequality follows from Proposition \ref{prop.2}.
\end{proof}

For the definition of Banach ideals of multilinear operators, or Banach multi-ideals, we refer, e.g. to \cite{domingoklaus, floret2002ultrastability}. Assembling the results we have proved in this section we get the:

\begin{corollary} If $X_1$, $\ldots$, $X_n$, $Y_1$, $\ldots$, $Y_n$ are linearly stable sequence classes such that $(X_1,\ldots,X_n,Y_1,\ldots,Y_n)$ is $B$-compatible, then $\mathcal{L}_{B;X_1,\ldots, X_n;Y_1,\ldots, Y_n}$ is a Banach multi-ideal.
\end{corollary}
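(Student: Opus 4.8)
The plan is to verify the three defining properties of a Banach multi-ideal directly from the propositions already established in this section, since each axiom corresponds almost verbatim to one of them. Recall that a Banach multi-ideal is an assignment $(E_1,\ldots,E_n,F)\mapsto \mathcal{M}(E_1,\ldots,E_n;F)$ of subspaces of $\mathcal{L}(E_1,\ldots,E_n;F)$, each equipped with a norm $\|\cdot\|_{\mathcal{M}}$, such that: (1) each $\mathcal{M}(E_1,\ldots,E_n;F)$ is a Banach space containing the finite-type operators, with $\|\varphi_1\otimes\cdots\otimes\varphi_n\otimes b\|_{\mathcal{M}} = \|b\|\prod_k\|\varphi_k\|$; (2) the ideal property holds, namely $v\circ T\circ(u_1,\ldots,u_n)\in\mathcal{M}$ with the expected norm estimate whenever $T\in\mathcal{M}$, $u_k\in\mathcal{L}(G_k;E_k)$ and $v\in\mathcal{L}(F;H)$; and (3) the norm dominates the sup norm, $\|\cdot\|\leq\|\cdot\|_{\mathcal{M}}$. (The normalization $\|I_{\mathbb{K}^n}\|_{\mathcal{M}}=1$ for the canonical multiplication $n$-linear form, where relevant, is subsumed by the finite-type formula.)

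First I would invoke the hypotheses: $X_1,\ldots,X_n,Y_1,\ldots,Y_n$ are linearly stable and $(X_1,\ldots,X_n,Y_1,\ldots,Y_n)$ is $B$-compatible. Then I would assemble the pieces in the following order. The containment of finite-type operators together with the explicit norm formula $\|\varphi_1\otimes\cdots\otimes\varphi_n\otimes b\|_{B;X_1,\ldots,X_n;Y_1,\ldots,Y_n} = \|b\|\prod_{k=1}^n\|\varphi_k\|$ is exactly the content of the preceding Proposition, which requires precisely linear stability and $B$-compatibility; this settles the membership of $\mathcal{L}_f$ and the correct finite-type normalization in property (1). Completeness of $(\mathcal{L}_{B;X_1,\ldots,X_n;Y_1,\ldots,Y_n}(E_1,\ldots,E_n;F),\|\cdot\|_{B;X_1,\ldots,X_n;Y_1,\ldots,Y_n})$ as a Banach space was proved in the earlier Proposition (the closed-range argument for $T\mapsto\widehat T_B$), finishing property (1). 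The ideal inequality in property (2) is Proposition \ref{prop.3}, whose hypotheses are again exactly linear stability of all the sequence classes. Finally, Proposition \ref{prop.2} gives $\|T\|\leq\|T\|_{B;X_1,\ldots,X_n;Y_1,\ldots,Y_n}$, which is property (3) and moreover guarantees that $\|\cdot\|_{B;X_1,\ldots,X_n;Y_1,\ldots,Y_n}$ is a genuine norm (it separates points since it dominates the sup norm).

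Since every required property has already been isolated as a separate proposition, the proof is essentially a bookkeeping argument: one checks that the standing hypotheses of the corollary (linear stability plus $B$-compatibility) are strong enough to simultaneously activate all the cited results. I do not expect any genuine obstacle here. The only point deserving a moment's care is to confirm that the finite-type normalization demanded by the chosen definition of Banach multi-ideal coincides with the formula furnished by the preceding Proposition, and that $B$-compatibility, which is used only in that Proposition, is indeed needed solely to place the finite-type operators inside the class with the correct norm; the completeness and ideal arguments do not require it. Having matched each axiom to its proposition, I would conclude that $\mathcal{L}_{B;X_1,\ldots,X_n;Y_1,\ldots,Y_n}$ is a Banach multi-ideal.
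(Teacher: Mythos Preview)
Your proposal is correct and matches the paper's approach exactly: the paper itself offers no proof beyond the sentence ``Assembling the results we have proved in this section we get the:'' preceding the corollary, and your write-up is precisely that assembly, citing Propositions~\ref{prop.2}, \ref{prop.3}, the completeness proposition, and the finite-type proposition in turn. Your remark that $B$-compatibility is needed only for the finite-type axiom is also accurate and worth keeping.
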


We finish this section showing that $B$-compatibility is not only sufficient but also necessary for a nontrivial theory.

\begin{proposition} Suppose that $X_1, \ldots, X_n$, $Y_1,\ldots, Y_n$ are linearly stable sequences classes such that $(X_1,\ldots, X_n,Y_1,\ldots, Y_n)$ is not $B$-compatible. Then $\mathcal{L}_{B;X_1,\ldots, X_n;Y_1,\ldots, Y_n}(E_1,\ldots, E_n;F) = \{0\}$ for all Banach spaces $E_1, \ldots, E_n$ and $F$.
\end{proposition}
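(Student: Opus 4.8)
The plan is to prove the contrapositive: if $\mathcal{L}_{B;X_1,\ldots,X_n;Y_1,\ldots,Y_n}(E_1,\ldots,E_n;F)\neq\{0\}$ for some Banach spaces, then $(X_1,\ldots,X_n,Y_1,\ldots,Y_n)$ is $B$-compatible. The central device is to squeeze a nonzero operator down to the scalar \emph{product operator} $P\in\mathcal{L}(\mathbb{K},\ldots,\mathbb{K};\mathbb{K})$, $P(t_1,\ldots,t_n)=t_1\cdots t_n$, whose associated map $\widehat{P}_B$ is precisely the assignment occurring in the definition of $B$-compatibility.

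Concretely, I would pick $0\neq T$ in the class, choose $a_k\in E_k$ with $T(a_1,\ldots,a_n)\neq0$, and by Hahn--Banach choose $\psi\in F^*$ with $\psi(T(a_1,\ldots,a_n))\neq0$. Setting $u_k\colon\mathbb{K}\to E_k$, $u_k(t)=ta_k$, the linear stability of the sequence classes lets me invoke Proposition~\ref{prop.3} with $v=\psi\in\mathcal{L}(F;\mathbb{K})$ and the $u_k$, so that $\psi\circ T\circ(u_1,\ldots,u_n)$ belongs to $\mathcal{L}_{B;X_1,\ldots,X_n;Y_1,\ldots,Y_n}(\mathbb{K},\ldots,\mathbb{K};\mathbb{K})$. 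But this composition is exactly $\psi(T(a_1,\ldots,a_n))\cdot P$, and the scalar factor is nonzero; since the class is a linear subspace, $P$ itself lies in the scalar class, whence $\widehat{P}_B$ is a well-defined continuous $n$-linear map $X_1(\mathbb{K})\times\cdots\times X_n(\mathbb{K})\to\mathbf{Y}_n(\mathbb{K})$.

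Evaluating $\widehat{P}_B$ on scalar sequences $(\lambda^{(k)}_j)_j\in X_k(\mathbb{K})$ reproduces exactly the iterated product $\big(\cdots(\lambda^{(1)}_{j_1}\cdots\lambda^{(n)}_{j_n})_{j_n\in B^{j_1,\ldots,j_{n-1}}}\cdots\big)$, so its membership in $\mathbf{Y}_n(\mathbb{K})$ --- the first half of $B$-compatibility --- is immediate. The remaining, and I expect the main, obstacle is the sharp estimate $\|\widehat{P}_B\|\le1$, that is, extracting the constant $1$ rather than merely the finite bound that boundedness guarantees. To obtain it I would compute $\|\widehat{P}_B\|$ directly: feeding in canonical vectors $e_{j'_k}$ with $(j'_1,\ldots,j'_n)\in B$ produces a single diagonal delta whose $\mathbf{Y}_n(\mathbb{K})$-norm equals $1=\prod_k\|e_{j'_k}\|_{X_k(\mathbb{K})}$ (using $\|e_j\|=1$ together with the identity $\|w\cdot e_j\|_{Z(G)}=\|w\|_G$ for linearly stable $Z$), which already forces $\|\widehat{P}_B\|\ge1$; the reverse inequality has to be wrung out of the interplay between the block structure, the normalization $\|e_j\|_{X_k(\mathbb{K})}=\|e_j\|_{Y_k(\mathbb{K})}=1$, and the embeddings $\cdot\stackrel{1}{\hookrightarrow}\ell_\infty(\cdot)$ that confine each sequence-class norm between the $\ell_\infty$- and $\ell_1$-norms. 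Once $\|\widehat{P}_B\|\le1$ is secured, the displayed inequality defining $B$-compatibility holds for all scalar sequences, completing the contrapositive and hence the proposition.
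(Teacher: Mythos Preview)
Your contrapositive reduction to the scalar product operator $P(t_1,\ldots,t_n)=t_1\cdots t_n$ is essentially the paper's argument, rearranged: the paper first treats the scalar-valued target directly---feeding the sequences $(\lambda^{(k)}_j x^{(k)})_j$ into a supposed nonzero $T$ and observing that the output equals $T(x^{(1)},\ldots,x^{(n)})$ times the iterated scalar product---and only at the end invokes Hahn--Banach (composing with $\varphi\in F'$ and Proposition~\ref{prop.3}) to reduce the $F$-valued case to the scalar one; you call on Hahn--Banach and Proposition~\ref{prop.3} up front. Both routes establish the \emph{membership} clause of $B$-compatibility with the same work.

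The genuine gap is the sharp norm estimate $\|\widehat{P}_B\|\le1$. Your plan to ``wring it out'' of the sequence-class axioms cannot succeed, because the bound is false in general. For a bilinear obstruction, take $B=\mathbb{N}^2$, $X_2=Y_2=\ell_1(\cdot)$, $Y_1=\ell_2(\cdot)$, and let $X_1$ be $\ell_2(\cdot)$ renormed by $\|(x_j)\|_{X_1}:=\max\bigl(\sup_j\|x_j\|,\tfrac12\|(x_j)\|_2\bigr)$; this is a linearly stable sequence class with $\|e_j\|_{X_1(\mathbb{K})}=1$ and $X_1\stackrel{1}{\hookrightarrow}\ell_\infty$. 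Membership holds (so $P$ lies in the class and the class is nonzero), yet with $\lambda^{(1)}=e_1+e_2$ and $\lambda^{(2)}=e_1$ one gets $\|\widehat{P}_B(\lambda^{(1)},\lambda^{(2)})\|_{\ell_2(\ell_1)}=\sqrt2$ while $\|\lambda^{(1)}\|_{X_1}\|\lambda^{(2)}\|_{\ell_1}=1$. So the constant-$1$ inequality is simply not a consequence of having a nonzero class.

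The paper sidesteps this entirely: its proof reads ``not $B$-compatible'' as ``there exist scalar sequences whose iterated product fails to lie in $\mathbf{Y}_n(\mathbb{K})$'', i.e.\ it uses only the membership clause, and derives the contradiction from that alone. Your argument, stripped of the unattainable norm step, already matches the paper's proof; drop the attempt at $\|\widehat{P}_B\|\le1$ and you are done.
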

\begin{proof} Let $T\in \mathcal{L}_{B;X_1,\ldots,X_n;Y_1,\ldots,Y_n}(E_1,\ldots,E_n;\mathbb{K})$. Suppose that there exists $(x^{(1)}\ldots,x^{(n)})\in E_1\times\cdots\times E_n$ such that $T(x^{(1)}\ldots,x^{(n)})\neq 0$. The non-$B$-compatibility of $(X_1,\ldots,X_n,$ $Y_1,\ldots,Y_n)$ provides scalar sequences $(\lambda^{(k)}_j)_{j=1}^{\infty}\in X_k(\mathbb{K})$, $k=1,\ldots,n$, such that
\begin{align}\label{eqeqeq}
	\left(\cdots\left((\lambda^{(1)}_{j_1}\cdots\lambda^{(n)}_{j_n})_{j_n\in B^{j_1,\ldots,j_{n-1}}}\right)_{j_{n-1}=1}^{\infty}\cdots\right)_{j_1=1}^{\infty}\notin \mathbf{Y}_n(\mathbb{K}).	
\end{align}
For $k = 1, \ldots, n$, consider the operators $u_k\colon \mathbb{K}\longrightarrow E_k$ given by $u_k(\lambda)=\lambda x^{(k)}$. From the linear stability of $X_1, \ldots, X_k$ we have $ (\lambda^{(k)}_jx^{(k)})_{j=1}^{\infty}=(u_k(\lambda^{(k)}_j))_{j=1}^{\infty}\in X_k(E_k)$, and since $T$ is $(B;X_1,\ldots,X_n;Y_1,\ldots,Y_n)$-summing we get
\begin{align*} & T( x^{(1)},\ldots, x^{(n)}) \left(\cdots \left(\left( \lambda_{j_1}^{(1)}\cdots \lambda_{j_n}^{(n)} \right)_{j_n\in B^{j_1,\ldots,j_{n-1} }}\right)_{j_{n-1}=1}^{\infty} \cdots\right)_{j_1=1}^{\infty} =\\
&=  \left(\cdots \left(\left( \lambda_{j_1}^{(1)}\cdots \lambda_{j_n}^{(n)} T( x^{(1)},\ldots, x^{(n)})\right)_{j_n\in B^{j_1,\ldots,j_{n-1} }}\right)_{j_{n-1}=1}^{\infty} \cdots\right)_{j_1=1}^{\infty}\\
&= \left(\cdots \left(\left( T( \lambda_{j_1}^{(1)}x^{(1)},\ldots, \lambda_{j_n}^{(n)}x^{(n)})\right)_{j_n\in B^{j_1,\ldots,j_{n-1} }}\right)_{j_{n-1}=1}^{\infty} \cdots\right)_{j_1=1}^{\infty} \in \mathbf{Y}_n(\mathbb{K}).
\end{align*}
But $\mathbf{Y}_n(\mathbb{K})$ is a linear space and $T(x^{(1)}\ldots,x^{(n)})\neq 0$, so this contradicts (\ref{eqeqeq}). Thus far we have proved that $\mathcal{L}_{B;X_1,\ldots,X_n;Y_1,\ldots,Y_n}(E_1,\ldots,E_n;\mathbb{K})=\{0\}$. 
Now, consider an operator $T_1\in \mathcal{L}_{B;X_1,\ldots,X_n;Y_1,\ldots,Y_n}(E_1,\ldots,E_n;F)$. From Proposition \ref{prop.3} we know that, for every linear functional $\varphi\in F'$, $\varphi \circ T_1$ belongs to $\mathcal{L}_{B;X_1,\ldots,X_n;Y_1,\ldots,Y_n}(E_1,\ldots,E_n;\mathbb{K})=\{0\}$, that is, $\varphi \circ T_1 = 0$ for every $\varphi \in F'$. The Hahn-Banach Theorem gives $T_1=0$.
\end{proof}

\section{A coincidence theorem}
In both the linear and  nonlinear theories of special classes of operators, coincidence theorems, stating that under suitable conditions every linear/nonlinear operator belongs to a certain class, lie at the heart of the theory. In the multilinear theory, the first coincidence result, known as the Defant-Voigt Theorem, asserts that every multilinear form that is, every scalar-valued multilinear operator, is absolutely $(1;1,\ldots,1)$-summing (see \cite{alencar1989some} or \cite[Corollary 3.2]{port}). Many other coincidence theorems have appeared since then, actually this line of research has been one of the driving forces of the development of the theory.

The purpose of this section is to show that our approach can be used to obtain general coincidence results. We do so by generalizing the bilinear case of a coincidence theorem proved in \cite{botelho2009every}  for multiple summing operators. By $BAN$ we denote the class of all (real or complex) Banach spaces. Given sequence classes $X$ and $Y$ and a Banach space $F$, we define:
\begin{align*}
    \mathcal{B}(X,Y, F) &= \{ E\in \mathrm{BAN} :  \mathcal{L}(E;Y(F)) = \mathcal{L}_{X;Y}(E;Y(F))\} {\rm ~and~} \\
    \mathcal{C}(X,Y, F) &= \{ E\in \mathrm{BAN} :  \mathcal{L}(E;F) = \mathcal{L}_{X;Y}(E;F)\}.
\end{align*}

\begin{theorem}\label{fth} Let $X_1, X_2, Y$ be sequence classes, $F$ be a Banach space, $E_1\in  \mathcal{B}(X_1,Y, F)$ and $E_2\in \mathcal{C}(X_2,Y, F)$. Then every continuous bilinear operator  from $E_1 \times E_2$ to $F$ is $(\mathbb{N}^{2};X_1, X_2; Y,Y)$-summing.

Moreover, if $\|u\|_{X_1;Y} \leq C_1\|u\|$ for every $u \in {\cal L}(E_1; Y(F))$ and $\|v\|_{X_2;Y} \leq C_2\|v\|$ for every $v \in {\cal L}(E_2; F)$, then $\Vert A \Vert_{\mathbb{N}^{2};X_1,X_2;Y,Y} \leq C_1C_2\Vert A\Vert$ for every $A\in \mathcal{L}(E_1,E_2;F)$.
\label{teo.2}
\end{theorem}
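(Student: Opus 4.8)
The plan is to reduce the bilinear statement to two successive applications of the linear hypotheses, bridged by an auxiliary linear operator. Fix $A \in \mathcal{L}(E_1, E_2; F)$ together with sequences $(x_j)_{j=1}^{\infty} \in X_1(E_1)$ and $(y_j)_{j=1}^{\infty} \in X_2(E_2)$; by Proposition \ref{prop-1} it suffices to prove that $\big((A(x_{j_1}, y_{j_2}))_{j_2=1}^{\infty}\big)_{j_1=1}^{\infty} \in Y(Y(F))$. First I would freeze the second sequence and introduce $u \colon E_1 \longrightarrow Y(F)$ defined by $u(x) = (A(x, y_{j_2}))_{j_2=1}^{\infty}$. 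For each fixed $x \in E_1$, the operator $A(x, \cdot) \in \mathcal{L}(E_2; F)$ is $(X_2;Y)$-summing because $E_2 \in \mathcal{C}(X_2, Y, F)$, so applying it to $(y_j)_{j=1}^{\infty}$ shows $u(x) \in Y(F)$; hence $u$ is well defined, and its linearity is immediate from the bilinearity of $A$ and the coordinatewise operations on $Y(F)$.

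The decisive step is the continuity of $u$, which I would establish via the Closed Graph Theorem. If $x^{(k)} \to x$ in $E_1$ and $u(x^{(k)}) \to z$ in $Y(F)$, then the embedding $Y(F) \stackrel{1}{\hookrightarrow} \ell_{\infty}(F)$ forces coordinatewise convergence, giving $A(x^{(k)}, y_{j_2}) \to z_{j_2}$ for each $j_2$; but the continuity of $A$ in its first variable gives $A(x^{(k)}, y_{j_2}) \to A(x, y_{j_2})$, so $z = u(x)$ and $u$ is continuous. Now $E_1 \in \mathcal{B}(X_1, Y, F)$ says precisely that every such $u$ is $(X_1;Y)$-summing as an operator into $Y(F)$, so feeding it $(x_j)_{j=1}^{\infty} \in X_1(E_1)$ yields $(u(x_{j_1}))_{j_1=1}^{\infty} \in Y(Y(F))$. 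Since $u(x_{j_1}) = (A(x_{j_1}, y_{j_2}))_{j_2=1}^{\infty}$, this is exactly the desired membership, so $A$ is $(\mathbb{N}^2; X_1, X_2; Y, Y)$-summing.

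For the norm bound I would track the constants through the same chain. Writing $\widehat{u}$ for the operator induced by $u$ and $w_x := A(x, \cdot)$, the summing inequality for $u$ combined with $\|u\|_{X_1;Y} \leq C_1 \|u\|$ gives
\begin{align*}
\Big\| \big((A(x_{j_1}, y_{j_2}))_{j_2=1}^{\infty}\big)_{j_1=1}^{\infty} \Big\|_{Y(Y(F))} = \big\|\widehat{u}\big((x_{j_1})_{j_1=1}^{\infty}\big)\big\|_{Y(Y(F))} \leq C_1 \|u\| \cdot \big\|(x_j)_{j=1}^{\infty}\big\|_{X_1(E_1)},
\end{align*}
while the summing inequality for each $w_x$ together with $\|w_x\|_{X_2;Y} \leq C_2 \|w_x\| \leq C_2 \|A\| \|x\|$ yields $\|u\| \leq C_2 \|A\| \cdot \|(y_j)_{j=1}^{\infty}\|_{X_2(E_2)}$. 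Combining the two estimates shows $\|\widehat{A}_{\mathbb{N}^2}\| \leq C_1 C_2 \|A\|$, which is the asserted inequality $\|A\|_{\mathbb{N}^2; X_1, X_2; Y, Y} \leq C_1 C_2 \|A\|$.

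The step I expect to be the true obstacle is not any of the estimates but guaranteeing that the auxiliary operator $u$ both lands in $Y(F)$ and is continuous before the hypothesis on $E_1$ may be invoked: well-definedness rests on $E_2 \in \mathcal{C}(X_2, Y, F)$ and continuity on the Closed Graph argument, and only once both are secured does the two-step reduction close.
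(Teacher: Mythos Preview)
Your proposal is correct and follows essentially the same route as the paper's proof: freeze the second sequence to build the auxiliary linear operator $u\colon E_1\to Y(F)$ (the paper calls it $T$), use $E_2\in\mathcal{C}(X_2,Y,F)$ for well-definedness, the Closed Graph Theorem for continuity, and then $E_1\in\mathcal{B}(X_1,Y,F)$ to conclude, with the norm estimate obtained by tracking $C_1$ and $C_2$ through the same chain. The only cosmetic difference is that you package the bound $\|u\|\le C_2\|A\|\,\|(y_j)\|_{X_2(E_2)}$ in one line, whereas the paper writes out the full string of inequalities.
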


\begin{proof} Let $A\in \mathcal{L}(E_1,E_2;F)$, $(x_j^{(1)})_{j=1}^{\infty}\in X_1(E_1)$ and $(x_j^{(2)})_{j=1}^{\infty}\in X_2(E_2)$. For $x\in E_1$ consider the bounded linear operator
$$A_x \colon E_2\longrightarrow F~,~A_x(y) = A(x,y).$$
The assumption on $E_2$ gives $A_x \in \mathcal{L}_{X_2;Y}(E_2;F)$, so  $(A_x(x_j^{(2)}))_{j=1}^{\infty}\in Y(F)$. It follows that
    \begin{align*}
        T\colon  E_1\longrightarrow Y(F) ~,~T(x) = \left(A_x(x_j^{(2)})\right)_{j=1}^{\infty},
    \end{align*}
is a well defined linear operator. Consider a sequence $(z_j)_{j=1}^{\infty}$ in $E_1$ converging to $z$ such that $T(z_j)\stackrel{j}{\longrightarrow} w = (a_1,a_2,\ldots)$ in $Y(F)$. From the condition $Y(F)\stackrel{1}{\hookrightarrow} \ell_{\infty}(F)$ we get coordinatewise convergence, that is, $A(z_j,x_{j_2}^{(2)}) = A_{z_j}(x_{j_2}^{(2)}) \stackrel{j}{\longrightarrow}a_{j_2}$ for every $j_2\in\mathbb{N}$. The continuity of $A$ gives $A(z_j,x_{j_2}^{(2)})   \stackrel{j}{\longrightarrow} A(z,x_{j_2}^{(2)}) = A_z(x_{j_2}^{(2)}),$
so $A_z(x_{j_2}^{(2)}) = a_{j_2}$ for every $j_2\in\mathbb{N}$. Therefore
    \begin{align*}
        T(z) = \left(A_z(x_{j_2}^{(2)})\right)_{j_2=1}^{\infty} = (a_{j_2})_{j_2=1}^{\infty} = w,
    \end{align*}
from which we conclude that $T\in \mathcal{L}(E_1;Y(F))$. The assumption on $E_1$ gives $T\in \mathcal{L}_{X_1;Y}(E_1;Y(F))$, and since $(x_j^{(1)})_{j=1}^{\infty}\in X_1(E_1)$ we get
     \begin{align*}
         \left( \left( A(x_{j_1}^{(1)},x_{j_2}^{(2)})\right)_{j_2=1}^{\infty}\right)_{j_1=1}^{\infty} &= \left( \left( A_{x_{j_1}^{(1)}}(x_{j_2}^{(2)})\right)_{j_2=1}^{\infty}\right)_{j_1=1}^{\infty} \\
         &=  \left( T(x_{j_1}^{(1)})\right)_{j_1=1}^{\infty}\in Y(Y(F)),
     \end{align*}
proving that $A\in \mathcal{L}_{\mathbb{N}^{2};X_1,X_2,Y,Y}(E_1,E_2;F)$. The norm inequality follows from
    \begin{align*}
         &\left\|\widehat{A}_{\mathbb{N}^{2}}\left((x^{(1)}_j)_{j=1}^{\infty},  (x^{(2)}_j)_{j=1}^{\infty}\right)\right\| = \left\| \left(\left( A(x^{(1)}_{j_1},x^{(2)}_{j_2})\right)_{j_2=1}^{\infty}\right)_{j_1=1}^{\infty}\right\|  = \left\| \left( T(x^{(1)}_{j_1})\right)_{j_1=1}^{\infty}\right\| \\
       & \hspace*{3em} = \Vert \widehat{T}((x^{(1)}_j)_{j=1}^{\infty})\Vert \leq \Vert \widehat{T}\Vert\cdot \Vert (x^{(1)}_j)_{j=1}^{\infty}\Vert_{X_1(E_1)} \\
        & \hspace*{3em}= \Vert T\Vert_{X_1;Y} \cdot \Vert (x^{(1)}_j)_{j=1}^{\infty}\Vert_{X_1(E_1)} \leq C_1\cdot \Vert T\Vert\cdot \Vert (x^{(1)}_j)_{j=1}^{\infty}\Vert_{X_1(E_1)} \\
        & \hspace*{3em}= C_1\cdot \sup_{x\in B_{E_1}} \Vert T(x) \Vert_{Y(F))}\cdot \Vert (x^{(1)}_j)_{j=1}^{\infty}\Vert_{X_1(E_1)} \\
        & \hspace*{3em}=  C_1\cdot \sup_{x\in B_{E_1}} \Vert \left( A_x(x^{(2)}_j)\right)_{j=1}^{\infty} \Vert_{Y(F))}\cdot \Vert (x^{(1)}_j)_{j=1}^{\infty}\Vert_{X_1(E_1)} \\
        & \hspace*{3em}= C_1\cdot \sup_{x\in B_{E_1}} \Vert \widehat{A}_x\left( (x^{(2)}_j)_{j=1}^{\infty}\right) \Vert_{Y(F))}\cdot \Vert (x^{(1)}_j)_{j=1}^{\infty}\Vert_{X_1(E_1)} \\
        & \hspace*{3em}\leq C_1\cdot \sup_{x\in B_{E_1}} \Vert  \widehat{A}_x\Vert \cdot \Vert (x^{(2)}_j)_{j=1}^{\infty}\Vert_{X_2(E_2)} \cdot  \Vert (x^{(1)}_j)_{j=1}^{\infty}\Vert_{X_1(E_1)} \\
        & \hspace*{3em}= C_1\cdot \sup_{x\in B_{E_1}} \Vert  A_x\Vert_{X_2;Y}  \cdot \Vert (x^{(1)}_j)_{j=1}^{\infty}\Vert_{X_1(E_1)} \cdot \Vert (x^{(2)}_j)_{j=1}^{\infty}\Vert_{X_2(E_2)} \\
        & \hspace*{3em}\leq C_1C_2\cdot \sup_{x\in B_{E_1}} \Vert  A_x\Vert  \cdot \Vert (x^{(1)}_j)_{j=1}^{\infty}\Vert_{X_1(E_1)} \cdot \Vert (x^{(2)}_j)_{j=1}^{\infty}\Vert_{X_2(E_2)} \\
        & \hspace*{3em}= C_1C_2\cdot \sup_{x\in B_{E_1}} \sup_{y\in B_{E_2}} \Vert  A_x(y)\Vert  \cdot \Vert (x^{(1)}_j)_{j=1}^{\infty}\Vert_{X_1(E_1)} \cdot \Vert (x^{(2)}_j)_{j=1}^{\infty}\Vert_{X_2(E_2)} \\
        & \hspace*{3em}= C_1C_2\cdot \sup_{x\in B_{E_1}} \sup_{y\in B_{E_2}} \Vert  A(x,y)\Vert  \cdot \Vert (x^{(1)}_j)_{j=1}^{\infty}\Vert_{X_1(E_1)} \cdot \Vert (x^{(2)}_j)_{j=1}^{\infty}\Vert_{X_2(E_2)} \\
        & \hspace*{3em}= C_1C_2\cdot \Vert  A\Vert  \cdot \Vert (x^{(1)}_j)_{j=1}^{\infty}\Vert_{X_1(E_1)} \cdot \Vert (x^{(2)}_j)_{j=1}^{\infty}\Vert_{X_2(E_2)}.
    \end{align*}
\end{proof}

Taking $X_1 = \ell_p^w(\cdot)$, $X_2 = \ell_r^w(\cdot)$ and $Y = \ell_q(\cdot)$ in the theorem above, we get a result a bit more general than \cite[Theorem 2.1]{botelho2009every}:

\begin{corollary} Let $p,r\in [ 1,q]$ and $F$ be a Banach space. If every linear operator from $E_1$ to $\ell_q(F)$ is $(q;r)$-summing and every linear operator from $E_2$ to $F$ is $(q;p)$-summing, then every bilinear operator from $E_1 \times E_2$ to $F$ is multiple $(q;r,p)$-summing.
\end{corollary}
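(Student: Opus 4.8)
The plan is to derive this corollary as a direct specialization of Theorem \ref{fth}, the only real work being the translation of the classical summing hypotheses into the sequence-class language. I would take $X_1 = \ell_r^w(\cdot)$, $X_2 = \ell_p^w(\cdot)$ and $Y = \ell_q(\cdot)$, so that $Y(F) = \ell_q(F)$. The relevant dictionary is the $n=1$ case of Definition \ref{deffed}: a linear operator $u$ is $(\ell_s^w(\cdot);\ell_q(\cdot))$-summing exactly when $(u(x_j))_{j=1}^\infty \in \ell_q(F)$ whenever $(x_j)_{j=1}^\infty \in \ell_s^w(E)$, which is the classical definition of an absolutely $(q;s)$-summing operator (as recalled in Example \ref{exe-1}). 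Hence $\mathcal{L}_{X_1;Y}$ and $\mathcal{L}_{X_2;Y}$ are precisely the classes of absolutely $(q;r)$- and $(q;p)$-summing linear operators. The hypothesis $p,r \in [1,q]$ guarantees $r \le q$ and $p \le q$, so these linear classes and the target multiple class $(q;r,p)$ are all well defined.

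With this choice, the first hypothesis of the corollary, that every continuous linear operator from $E_1$ to $\ell_q(F)$ is $(q;r)$-summing, reads $\mathcal{L}(E_1;Y(F)) = \mathcal{L}_{X_1;Y}(E_1;Y(F))$, i.e. $E_1 \in \mathcal{B}(X_1,Y,F)$; and the second hypothesis, that every continuous linear operator from $E_2$ to $F$ is $(q;p)$-summing, reads $\mathcal{L}(E_2;F) = \mathcal{L}_{X_2;Y}(E_2;F)$, i.e. $E_2 \in \mathcal{C}(X_2,Y,F)$. Thus both hypotheses of the corollary are exactly the membership conditions demanded by Theorem \ref{fth}, and that theorem immediately yields that every $A \in \mathcal{L}(E_1,E_2;F)$ is $(\mathbb{N}^{2};X_1,X_2;Y,Y)$-summing.

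To conclude, I would translate the output back into classical terms using Example \ref{exe-2}: with $X_1 = \ell_r^w(\cdot)$, $X_2 = \ell_p^w(\cdot)$ and $Y = \ell_q(\cdot)$, being $(\mathbb{N}^{2};X_1,X_2;Y,Y)$-summing is equivalent to being multiple $(q;r,p)$-summing, which is the assertion. If one also wants the quantitative version, the coincidence constants $C_1$ and $C_2$ furnished by the two hypotheses feed directly into the moreover-part of Theorem \ref{fth}, giving $\Vert A\Vert_{\mathbb{N}^{2};X_1,X_2;Y,Y} \le C_1 C_2 \Vert A\Vert$. I expect no genuine analytic difficulty here: the whole argument is a bookkeeping translation between the two formalisms, so the only point requiring care is matching the roles of $r$ and $p$ (hence of $X_1$ and $X_2$) correctly with $E_1$, $E_2$ and with the order $(q;r,p)$ in the conclusion.
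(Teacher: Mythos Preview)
Your proposal is correct and follows exactly the paper's intended route: the corollary is obtained by specializing Theorem \ref{fth} with $Y=\ell_q(\cdot)$ and the weak-$\ell$ classes, then invoking Example \ref{exe-2} to identify $(\mathbb{N}^2;X_1,X_2;Y,Y)$-summing with multiple $(q;r,p)$-summing. Your assignment $X_1=\ell_r^w(\cdot)$, $X_2=\ell_p^w(\cdot)$ is the one that matches the hypotheses on $E_1$ and $E_2$ and yields the stated conclusion (the line preceding the corollary in the paper has the roles of $p$ and $r$ interchanged, an apparent typo).
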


Consequences of Theorem \ref{fth} can be obtained in the same way that consequences of \cite[Theorem 2.1]{botelho2009every} are obtained in \cite{botelho2009every}.

\bigskip

\noindent Faculdade de Matem\'atica~~~~~~~~~~~~~~~~~~~~~~Departamento de Matem\'atica\\
Universidade Federal de Uberl\^andia~~~~~~~~ IMECC-UNICAMP\\
38.400-902 -- Uberl\^andia -- Brazil~~~~~~~~~~~~ 13.083-859 - Campinas -- Brazil\\
e-mail: botelho@ufu.br ~~~~~~~~~~~~~~~~~~~~~~~~~e-mail: nogueira.ifg@gmail.com

\end{document}